\documentclass[reqno,11pt]{amsart}
\usepackage{latexsym,amssymb,amsfonts,amsmath,amsthm}
\usepackage{mathrsfs}
\usepackage[usenames]{color}
\usepackage{eucal}
\usepackage{graphicx}

\textwidth=14cm \hoffset = -1.0cm
\newtheorem{thm}{Theorem}[section]
\newtheorem{cor}[thm]{Corollary}
\newtheorem{lem}[thm]{Lemma}
\newtheorem{prop}[thm]{Proposition}
\theoremstyle{definition}

\theoremstyle{remark}
\newtheorem{rem}[thm]{Remark}
\numberwithin{equation}{section}
\newcommand{\Real}{\mathbb R}
\newcommand{\eps}{\varepsilon}

\newcommand{\F}{\mathcal{F}}

\newcommand{\one}[1]{\mathbf{1}_{\{#1\}}}

\newcommand{\One}[1]{\text{\Large \bf 1}\left\{#1\right\}}
\renewcommand{\P}{\mathsf{P}}
\newcommand{\Q}{\mathsf{Q}}
\newcommand{\PP}{\mathbb{P}}
\newcommand{\EE}{\mathbb{E}}
\newcommand{\E}{\mathsf{E}}

\begin{document}

\title[]{The Euler-Maruyama approximation for the absorption time of the CEV diffusion}%
\author{P. Chigansky}%
\address{Department of Statistics,
The Hebrew University,
Mount Scopus, Jerusalem 91905,
Israel}
\email{pchiga@mscc.huji.ac.il}

\author{F.C. Klebaner}
\address{ School of Mathematical Sciences,
Monash University Vic 3800,
Australia}
\email{fima.klebaner@monash.edu}

\thanks{ Research supported by the  Australian Research Council Grants DP0881011 and
DP0988483}

\keywords{diffusion, absorption times, weak convergence, Euler-Maruyama scheme, CEV model }%

\date{9, May, 2011}%
\begin{abstract}
A standard convergence analysis of the simulation schemes for the
hitting times of diffusions typically requires  non-degeneracy of
their coefficients on the boundary, which excludes the possibility
of absorption. In this paper we consider the CEV diffusion from the
mathematical finance and show how a weakly consistent approximation
for the absorption time can be constructed, using the Euler-Maruyama scheme.
\end{abstract}
\maketitle

\section{Introduction}

In scientific computations, expectations with respect to probabilities, induced by
continuous time processes, are often replaced by Monte Carlo averages over independent trajectories.
For diffusions, generated by stochastic differential equations (SDEs), the trajectories are usually  approximated numerically
(see e.g. \cite{KKK10}).
The accuracy assessment of such numerical procedures is  a well studied topic and
the available theory establishes and quantifies the convergence of the approximations to the actual solution in a variety of
modes, depending on the properties of the SDE coefficients. This in turn typically suffices to claim convergence of expectations
for path functionals,  continuous in an appropriate topology, but unfortunately, may not apply to
discontinuous functionals, some of which arise naturally in applications.

One important example of such a functional is the hitting time of a domain boundary.
Let $X=(X_t)_{t\ge 0}$ be
the diffusion process on $\Real$, generated by the It\^o SDE
\begin{equation}
\label{gensde}
dX_t = b(X_t)dt + a(X_t)dB_t, \quad X_0=x \in \Real,
\end{equation}
where $B=(B_t)_{t\ge 0}$ is the Brownian motion and the coefficients $b(\cdot)$ and $a(\cdot)$ are functions,
assumed to satisfy the regularity conditions, guaranteeing existence of the unique strong solution
(see e.g. \cite{RW1,RW2}). The hitting time of the level $\ell\in \Real$ is
$$
\tau_\ell(X) := \inf\{t\ge 0: X_t=\ell\},
$$
where  $\inf\{\emptyset\}=\infty$. Thus $\tau_\ell(X)$  is an extended random variable with
values in the Polish space $\bar\Real_+:=\Real_+\cup \{\infty\}$,  endowed with the
metric $\rho(s,t)=|\arctan(s)-\arctan(t)|$, $s,t\in \bar\Real_+$.

Consider a family of continuous processes $X^\delta=(X^\delta_t)_{t\ge 0}$, generated by
a numerical scheme with the time step parameter $\delta>0$
(such as e.g. the Euler-Maruyama recursion \eqref{em} and \eqref{lin} below)
and suppose that $X^\delta$ approximates the diffusion $X$ in the sense of weak convergence.
More precisely, let $C([0,\infty),\Real)$ be the space of real valued continuous functions on $[0,\infty)$,
endowed with the metric
\begin{equation}
\label{metric}
\varrho(u,v)=\sum_{j\ge 1} 2^{-j} \sup_{t\le j}|u_t-v_t|, \quad u,v\in C([0,\infty),\Real).
\end{equation}
Then $X^\delta$ converges weakly to $X$, if for any bounded and continuous functional $h: C([0,\infty),\Real)\mapsto \Real$
\begin{equation}
\label{wconv}
\lim_{\delta\to 0}\E h(X^\delta)=\E h(X).
\end{equation}
Such convergence can often be established using the techniques, developed in  e.g. \cite{KE86}, \cite{JS03}, \cite{LS89}.

In particular, if $\phi: C([0,\infty),\Real)\mapsto \bar \Real_+$ is  a functional, almost surely continuous with respect to the
measure induced by $X$,  then \eqref{wconv}
implies
\begin{equation}
\label{phiwc}
\lim_{\delta\to 0} \E f\big(\phi(X^\delta)\big) =  \E f\big(\phi(X)\big)
\end{equation}
for any  continuous and bounded function $f:\bar \Real_+\mapsto \Real$. In other words, the weak convergence
of the processes $X^\delta\to X$ implies the weak convergence of the random variables $\phi(X^\delta)\to \phi(X)$ or, equivalently,
the convergence of the probability distribution functions  $\lim_{\delta\to 0}\P(\phi(X^\delta)\le x)=\P(\phi(X)\le x)$
for any  $x\in \bar \Real_+$, at which the distribution function $\P(\phi(X)\le x)$ is continuous.

Let us now take a closer look at the hitting time
$$
\tau_\ell (u) =\inf\{t\ge 0: u_t=\ell\}, \quad u\in C([0,\infty),\Real),
$$
viewed as a functional on $C([0,\infty),\Real)$. Clearly $\tau_\ell(\cdot)$ is discontinuous
at some paths: for example, if $u_t = \max(1-t,0)$ and $u^j_t = \max(1-t,1/j)$, then
$\varrho(u_t,u_t^j)\le1/j\to 0$ as $j\to\infty$, but $\tau(u)=1$ and $\tau(u^j)=\infty$ for all $j$.

On the other hand, $\tau_\ell(\cdot)$ is continuous at any $u$, which either upcrosses $\ell$:
$$
\tau_{\ell+}(u):= \inf\{t\ge 0: u_t >\ell\} = \tau_\ell(u), \quad \text{if\ } u_0\le  \ell
$$
or downcrosses $\ell$:
$$
\tau_{\ell-}(u):= \inf\{t\ge 0: u_t <\ell\} = \tau_\ell(u), \quad \text{if\ } u_0\ge \ell.
$$

If this type of paths is typical for the diffusion \eqref{gensde}, i.e.
if the set of all such paths is  of full measure, induced by the process $X$, then $\tau_\ell(\cdot)$ is essentially continuous
and the weak convergence $X^\delta\to X$ still implies the weak convergence $\tau_\ell(X^\delta)\to \tau_\ell(X)$.
However, if $\ell$ is an accessible absorbing boundary of $X$, the paths which hit  $\ell$ cannot leave it at any further time.
For such diffusions $\tau_\ell(\cdot)$ is discontinuous on a set of positive  probability and the weak convergence
$\tau_\ell(X^\delta)\to \tau_\ell(X)$ cannot be directly deduced from  $X^\delta\to X$.

Hitting times play an important role in applied sciences, such as physics or finance, and since their exact probability distribution
cannot be found in a closed form beyond special cases, practical approximations are of considerable
interest. There are two principle approaches to compute such approximations. One is based on the fact
that the expectation of a given function of the hitting time solves the Dirichlet boundary problem for an appropriate PDE, and
thus the approximations can be computed using the generic tools from the PDE numerics.
Sometimes, the particular structure of the emerging PDE can be exploited to calculate expectations of special functions of hitting times, such as  moments (as e.g. in  the linear programming approach of \cite{HRS01}).

The probabilistic
alternative is to use the Monte Carlo simulations, in which the diffusion paths are approximated by numerical solvers.
Typically the diffusions are simulated on a discrete grid of points and the evaluation of the hitting times requires
construction of the continuous paths through an interpolation. The naive approach  is to use the general purpose interpolation techniques,
such as the one used in our paper (see \eqref{lin} below). Better results are obtained if the possibility of having a hit
between the grid points is taken into account as in e.g. \cite{LL94}, \cite{GS99}, \cite{GSZ01}, \cite{JL03}.

The convergence analysis of the approximations of the hitting times, based on the various numerical schemes,
appeared in \cite{M95}, \cite{MT99,MT02}, \cite{GM04}, \cite{G00}. The results obtained in these
works, assume ellipticity or hypoellipticity of the diffusion processes under consideration, which corresponds to
the case of non-absorbing boundary in the preceding discussion. The analysis beyond these non-degeneracy conditions appears to be
a much harder problem.

In this paper we consider a particular diffusion on $\Real_+$, with an absorbing boundary at $\{0\}$.
As explained above, the absorption time in this case is a genuinely discontinuous functional of the
diffusion paths, which makes the convergence analysis of the approximations a delicate
matter. We propose a simple approximation procedure, based on the Euler-Maruyama scheme, and prove its weak consistency.

In the next section we formulate the precise setting of the problem and state the main result, whose prove is
given in Section \ref{sec-3}. The results of  numerical simulations are gathered in Section \ref{sec-num}
and some supplementary calculations are moved to the appendices.

\section{The setting and the main result}


Consider the diffusion process $X=(X_t)_{t\ge 0}$, generated by the It\^o SDE
\begin{equation}\label{cev}
dX_t = \mu X_t dt + \sigma X^p_t dB_t, \quad x\in \Real_+, \quad t\ge 0,
\end{equation}
where $\mu\in \Real$, $\sigma>0$ and $p\in [1/2,1)$ are constants and $B=(B_t)_{t\ge 0}$
is the Brownian motion, defined on a filtered probability space $(\Omega,\F,(\F_t), \P)$, satisfying the
usual conditions. This SDE has the unique strong solution (Proposition 1 in \cite{AKL10}) and
is known in mathematical finance as the Constant Elasticity of Variance (CEV) model (see e.g. \cite{DK01}).
For $p=1/2$,  it  is also Feller's branching diffusion, being the weak limit of the Galton\,-\,Watson
branching processes under appropriate scaling.

The process $X$  is a regular diffusion on $\Real_+\cup\{0\}$ and a standard calculation reveals
that $\{0\}$  is an absorbing (or Feller's exit) boundary (see \S 6, Ch. 15 in \cite{KT2}).
We will denote by  $(\P_x)_{x\in \Real_+}$ the corresponding family of Markov probabilities with $\P_x(X_0=x)=1$, induced by $X$
on the measurable space $C([0,\infty),\Real_+)$ with the metric \eqref{metric}.

Consider the continuous time process $X^\delta=(X^\delta_t)$, $t\ge 0$, which satisfies
the Euler-Maruyama recursion at the grid points $t_j\in \delta \mathbb{Z}_+$
\begin{equation}
\label{em}
X^\delta_{t_{j}}= X^\delta_{t_{j-1}} + \mu\big(X^\delta_{t_{j-1}}\big)^+ \delta + \sigma \big(X^{\delta}_{t_{j-1}}\big)^{+p} \xi_j\sqrt{\delta}, \quad X_0=x>0
\end{equation}
and is piecewise linear otherwise:
\begin{equation}\label{lin}
X^\delta_t = \frac{t-t_{j-1}}{\delta}X^\delta_{t_{j}}  + \frac{t_{j}-t}{\delta} X^\delta_{t_{j-1}}, \quad t\in [t_{j-1},t_{j}],
\end{equation}
where $x^+:=\max(0,x)$,  $\delta>0$ is a small time step parameter and $(\xi_j)_{j\in \mathbb{Z}_+}$ is a sequence of i.i.d.
$N(0,1)$ random variables.

Since the diffusion coefficient of \eqref{cev} degenerates and is not Lipschitz on the boundary $\{0\}$, this SDE does not quite
fit the standard numerical frameworks such as  \cite{KP} or \cite{MT04}. Nevertheless the scheme \eqref{em} does approximate the solution of \eqref{cev}
in the sense of the weak convergence of measures, as was recently shown in
\cite{AKL10} (see also \cite{Z08}, \cite{Y02}, \cite{R09}). Consequently, for any  $\P_x$-a.s. continuous
functional $\phi: C\big([0,\infty),\Real\big)\mapsto \Real_+\cup\{\infty\}$
\begin{equation}\label{weak}
\phi(X^\delta)\xrightarrow[\delta\to 0]{w} \phi(X),
\end{equation}
where $\stackrel{w}{\to}$ stands for the the weak convergence,  defined in \eqref{phiwc}.
Since a typical trajectory of $X$ oscillates around the level $a>0$ after crossing it, the functional  $\tau_a(\cdot)$ is $\P_x$-a.s. continuous for $a>0$ (see Lemma \ref{lem1} below) and hence $\tau_a(X^\delta)\stackrel{w}{\to} \tau_a(X)$  as $\delta\to 0$.

This argument, however,
does not apply to $\tau_0(\cdot)$, since it is {\em essentially} discontinuous, as discussed in the Introduction.
Leaving the question of convergence $\tau_0(X^\delta)\stackrel{w}{\to} \tau_0(X)$ open, we shall prove the following result

\begin{thm}\label{thm}
For any  $\beta \in \left(0,\frac {1/2} {1-p}\right)$,
\begin{equation}
\label{main}
\tau_{\delta^\beta}(X^\delta)\xrightarrow{w} \tau_0(X), \quad \delta\to 0.
\end{equation}

\end{thm}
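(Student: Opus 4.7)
The plan is to prove convergence of the one-dimensional distribution functions, namely
\[
\lim_{\delta\to 0}\P\bigl(\tau_{\delta^\beta}(X^\delta)\le t\bigr)=\P\bigl(\tau_0(X)\le t\bigr),
\]
at every continuity point $t\in(0,\infty)$ of the right-hand side (and trivially at $t=\infty$). I establish this by proving the two one-sided inequalities separately.

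The upper bound, $\limsup_\delta\P(\tau_{\delta^\beta}(X^\delta)\le t)\le\P(\tau_0(X)\le t)$, is the softer direction. For any fixed $a\in(0,x)$ and for $\delta$ small enough that $\delta^\beta<a$, continuity of the piecewise linear path of $X^\delta$ forces $\tau_a(X^\delta)\le\tau_{\delta^\beta}(X^\delta)$, whence $\P(\tau_{\delta^\beta}(X^\delta)\le t)\le\P(\tau_a(X^\delta)\le t)$. Since $\tau_a(\cdot)$ is $\P_x$-a.s.\ continuous (Lemma \ref{lem1}), the weak convergence \eqref{weak} gives $\tau_a(X^\delta)\xrightarrow{w}\tau_a(X)$, so $\limsup_\delta\P(\tau_a(X^\delta)\le t)=\P(\tau_a(X)\le t)$ at a continuity point. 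Now $\tau_a(X)\uparrow\tau_0(X)$ as $a\downarrow 0$, so $\P(\tau_a(X)\le t)\downarrow\P(\tau_0(X)\le t)$ by continuity of measure.

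The lower bound is the substantive direction. Fix $\eta>0$ and $a\in(0,x)$. The strong Markov property of the Euler--Maruyama chain at the first grid time $\hat\tau$ following $\tau_a(X^\delta)$ gives the decomposition
\[
\P\bigl(\tau_{\delta^\beta}(X^\delta)\le t\bigr)\ge\P\bigl(\tau_a(X^\delta)\le t-\eta\bigr)\,\inf_{\delta^\beta<y\le a}\P_y\bigl(\tau_{\delta^\beta}(X^\delta)\le\eta/2\bigr),
\]
in which the infimum range reflects the fact that $X^\delta_{\hat\tau}\in(\delta^\beta,a]$ leaves the scheme to be restarted, whereas $X^\delta_{\hat\tau}\le\delta^\beta$ together with $X^\delta_{\hat\tau-\delta}>a$ already forces $\tau_{\delta^\beta}(X^\delta)\le\hat\tau\le\tau_a(X^\delta)+\delta\le t$ by the intermediate value theorem. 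Weak convergence bounds the first factor from below by $\P(\tau_a(X)\le t-\eta)$ in the $\delta\to 0$ limit, and letting first $a\downarrow 0$ and then $\eta\downarrow 0$ along continuity points pushes this quantity to $\P(\tau_0(X)<t)=\P(\tau_0(X)\le t)$. The lower bound therefore reduces to the uniform boundary estimate
\[
\lim_{a\downarrow 0}\liminf_{\delta\to 0}\inf_{\delta^\beta<y\le a}\P_y\bigl(\tau_{\delta^\beta}(X^\delta)\le\eta/2\bigr)=1.
\]

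This boundary estimate is the main technical obstacle and is where the restriction $\beta<\frac{1/2}{1-p}$ is used. Since $X^\delta$ is piecewise linear, $\{X^\delta_{\eta/2}\le 0\}\subseteq\{\tau_{\delta^\beta}(X^\delta)\le\eta/2\}$, so it suffices to control $\P_y(X^\delta_{\eta/2}>0)$. Starting from $y>0$ small, a single Euler step has standard deviation $\sigma y^p\sqrt\delta$, and the cumulative fluctuation over time $\eta/2$ is of order $\sigma y^p\sqrt\eta$; because $p\in[1/2,1)$, this dominates the starting level $y$ as $y\downarrow 0$ and drives the scheme below zero with probability tending to one. The condition $\beta<\frac{1/2}{1-p}$ enters when analysing the chain precisely at the absorbing layer $y=\delta^\beta$: only under this restriction is the per-step noise scale $\sigma\delta^{\beta p+1/2}$ of smaller order than $\delta^\beta$, so the scheme cannot leap across the layer in a single jump and $\tau_{\delta^\beta}(X^\delta)$ remains a faithful proxy for the true absorption time. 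A Lamperti-type change of variable $Y^\delta_t=\sigma^{-2}(1-p)^{-2}(X^\delta_t)^{2(1-p)}$, which flattens the state dependence of the diffusion coefficient, provides a convenient setting in which standard maximal inequalities make the above heuristics rigorous.
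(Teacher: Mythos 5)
Your reduction is sound as far as it goes: the two-sided bound on the distribution functions, with the strong Markov restart at the first grid time after $\tau_a(X^\delta)$, correctly isolates the whole difficulty in the single boundary estimate
\[
\lim_{a\downarrow 0}\liminf_{\delta\to 0}\inf_{\delta^\beta<y\le a}\P_y\bigl(\tau_{\delta^\beta}(X^\delta)\le\eta/2\bigr)=1 ,
\]
which is essentially the paper's Lemma \ref{lem3} in complementary form (the paper packages the soft part through a Billingsley/Ethier-type lemma, Proposition \ref{lem:Bil}, rather than through distribution functions, but that difference is cosmetic). The problem is that you do not prove this estimate; you argue it heuristically by freezing the diffusion coefficient at its initial value $y^p$ and claiming the cumulative fluctuation $\sigma y^p\sqrt{\eta}$ dominates $y$. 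That heuristic ignores exactly the obstruction that makes the statement nontrivial: as the chain descends toward $0$ the coefficient $\sigma(X^\delta)^p$ shrinks with it, so the per-step noise collapses and the boundary becomes progressively harder to reach. Whether the scheme nevertheless reaches the layer $\delta^\beta$ in bounded time with probability tending to one is precisely the content of the estimates \eqref{une} and \eqref{doux}, and it also requires ruling out the competing event that the chain escapes upward before reaching the layer (the term $\PP_z(\nu_{\delta^\beta}>\nu_1)$, controlled in the paper via the harmonic function $\varphi$), which your sketch does not address at all.

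The appeal to a Lamperti-type change of variable plus ``standard maximal inequalities'' does not close this gap, because the transform only linearizes the \emph{continuous} SDE. Applied to the discrete Euler chain, the map $x\mapsto x^{2(1-p)}$ (or $x\mapsto x^{1-p}$) has second and third derivatives that blow up like negative powers of $x$ at the origin, so the Taylor remainder of the transformed chain contains terms of the form $(\tilde X^\delta_{t_{j-1}})^{-q}(X^\delta_{t_{j-1}})^{rp}$ with an intermediate point $\tilde X^\delta_{t_{j-1}}$ that may undershoot $X^\delta_{t_{j-1}}\ge\delta^\beta$ within a single step. Controlling such terms is where the restriction $\beta<\frac{1/2}{1-p}$ genuinely enters (compare \eqref{eq:2.12}--\eqref{eq:2.15} and the derivative bounds of Lemmas \ref{lem:A1}--\ref{lem:A2}); your one-line explanation of the condition (the per-step noise at the layer being $o(\delta^\beta)$) reproduces the correct arithmetic but is not attached to any argument that uses it. In short, the architecture is acceptable and genuinely parallel to the paper's, but the theorem's entire technical content --- the discrete It\^o--Taylor analysis of $\psi(x)=\E_x\,\tau_0\wedge\tau_1$ and $\varphi(x)=\P_x(\tau_0>\tau_1)$ near the degenerate boundary, or some equivalent --- is missing.
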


Note that $\tau_{\delta^\beta}(\cdot)$ is a continuous functional for any fixed $\delta>0$
and hence can be seen as a mollified version of the discontinuous $\tau_0(\cdot)$. The parameter $\beta$
controls the mollification, relatively to the step-size parameter of the Euler-Maruyama algorithm.

In practical terms,  the convergence \eqref{main} provides theoretical justification for  the procedure, in which the approximate trajectory $X^\delta$,
generated by \eqref{em} and \eqref{lin}, is stopped not at $0$, but at $\delta^\beta>0$, which only approaches  zero as $\delta\to 0$.
Our method does not quantify the convergence in terms of e.g. rates, but the numerical experiments in Section
\ref{sec-num} indicate that this stopping rule produces practically adequate results.

\begin{rem}
As will become clear from the proof, our approach exploits the local behavior of the SDE coefficients
near the  boundary and can be applied to the more general one-dimensional diffusions of the form \eqref{gensde},
for which a weakly convergent numerical scheme is available. For example, all the arguments in the proof of
Theorem \ref{thm} directly apply to the diffusion, whose
coefficients $b(x)$ and $a(x)$ have a similar local asymptotic at $0$ as the CEV model.
The SDE \eqref{cev} is a  study case, which seems to capture the essential difficulties of the problem, related to
the degeneracy of the SDE coefficients on the absorbing boundary.
It is a convenient choice,  since the weak convergence \eqref{weak}, being the starting point of our approach,
has been already established for the CEV model in \cite{AKL10}, and the explicit formulas  for the probability
density of $\tau_0(X)$ are available,  allowing to carry out the numerical experiments.
\end{rem}

\section{The proof of Theorem \ref{thm}} \label{sec-3}

The proof, inspired by the approach of S.Ethier \cite{E79}, is  based on the following
observation  (a variation of Billingsley's lemma, see Proposition 6.1 \cite{E79})

\begin{prop}\label{lem:Bil}
Let $\Q^n$, $n\in \mathbb{N}$ and $\Q$ be Borel probability measures on a metric space $S$ such that
$\Q^n\xrightarrow{w}\Q$ as $n\to\infty$. Let $S'$ be a separable metric space with metric $\rho$,
and suppose that $\phi_k$, $k\in \mathbb{N}$ and $\phi$ are Borel measurable
mappings of $S$ into $S'$ such that
\begin{enumerate}
\renewcommand{\theenumi}{\roman{enumi}}
\item\label{i} $\phi_k$ is $\Q$-a.s. continuous for all $k\in \mathbb{N}$
\item\label{ii} $\lim_k \phi_k= \phi$, $\Q$-a.s.
\item\label{iii} for every $\eta>0$,
$$
\lim_{k}\varlimsup_{n}\Q^n\Big(\rho(\phi_k,\phi_{\alpha_n})>\eta\Big) =0.
$$
for an increasing real sequence $(\alpha_n)$.
\end{enumerate}
Then $\Q^n\circ \phi^{-1}_{\alpha_n} \xrightarrow{w} \Q\circ \phi^{-1}$ as $n\to\infty$.
\end{prop}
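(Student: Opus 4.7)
The plan is to reduce the claim to the Portmanteau characterization of weak convergence: it suffices to show that
\[
\lim_{n\to\infty}\int_S f\circ\phi_{\alpha_n}\,d\Q^n \;=\;\int_S f\circ\phi\,d\Q
\]
for every bounded uniformly continuous $f:S'\to\Real$. I would fix such an $f$ with $\|f\|_\infty\le M$ and, given $\eps>0$, choose $\eta>0$ so that $\rho(y,y')<\eta$ implies $|f(y)-f(y')|<\eps$. The core idea is to insert the intermediate map $\phi_k$ and split
\[
\bigl|\E_{\Q^n}f(\phi_{\alpha_n})-\E_\Q f(\phi)\bigr| \;\le\; A_{n,k}+B_{n,k}+C_k,
\]
where $A_{n,k}=|\E_{\Q^n}f(\phi_{\alpha_n})-\E_{\Q^n}f(\phi_k)|$, $B_{n,k}=|\E_{\Q^n}f(\phi_k)-\E_\Q f(\phi_k)|$, and $C_k=|\E_\Q f(\phi_k)-\E_\Q f(\phi)|$, and then manage the three terms by different hypotheses.

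For $C_k$: by (ii) $\phi_k\to\phi$ $\Q$-a.s., hence $f\circ\phi_k\to f\circ\phi$ $\Q$-a.s. by continuity of $f$, and dominated convergence (using the bound $M$) gives $C_k\to 0$ as $k\to\infty$. For $B_{n,k}$: by (i) the map $f\circ\phi_k$ is bounded and $\Q$-a.s. continuous, so the continuous mapping theorem applied to the given weak convergence $\Q^n\to\Q$ yields $B_{n,k}\to 0$ as $n\to\infty$ for each fixed $k$. For $A_{n,k}$: using uniform continuity of $f$ to split the expectation over $\{\rho(\phi_{\alpha_n},\phi_k)\le\eta\}$ and its complement,
\[
A_{n,k}\;\le\;\eps+2M\,\Q^n\bigl(\rho(\phi_k,\phi_{\alpha_n})>\eta\bigr),
\]
so assumption (iii) gives $\lim_k\limsup_n A_{n,k}\le\eps$.

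Combining the three estimates in the correct order — first $\limsup_{n\to\infty}$ (which eliminates $B_{n,k}$ and leaves $A_{n,k}$ under control for each $k$), and then $\lim_{k\to\infty}$ (which handles $C_k$ and the residual from $A_{n,k}$) — yields
\[
\limsup_{n\to\infty}\bigl|\E_{\Q^n}f(\phi_{\alpha_n})-\E_\Q f(\phi)\bigr|\;\le\;\eps,
\]
and since $\eps$ is arbitrary, the required convergence follows. The only mildly delicate point is ordering the iterated limits so that the triangle-inequality decomposition actually closes; condition (iii) is designed precisely so that the diagonal term $A_{n,k}$ dies when $\limsup_n$ is taken before $\lim_k$, which is what makes the argument work despite the fact that $\phi_{\alpha_n}$ itself is not assumed to be $\Q$-continuous.
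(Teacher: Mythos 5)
Your proof is correct and follows essentially the same route as the paper: the identical three-term triangle-inequality decomposition, with the last term killed by dominated convergence via (ii), the middle term by the continuous mapping theorem via (i) and $\Q^n\xrightarrow{w}\Q$, and the first term by (iii), taking $\varlimsup_n$ before $\lim_k$. Your explicit reduction to bounded \emph{uniformly} continuous $f$ via the Portmanteau theorem is in fact a welcome refinement, since converting the probability bound in (iii) into the estimate $A_{n,k}\le\eps+2M\,\Q^n\bigl(\rho(\phi_k,\phi_{\alpha_n})>\eta\bigr)$ genuinely requires uniform continuity of the test function, a point the paper's proof leaves implicit.
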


\begin{proof}
Let $h$ be a continuous with respect to $\rho$ bounded real valued function on $S'$, then
\begin{multline*}
\big|\E^n h(\phi_{\alpha_n})-\E h(\phi)\big|\le
\E^n\big| h(\phi_{\alpha_n})- h(\phi_k)\big|
+ \\
\big|\E^n h(\phi_k)-\E h(\phi_k)\big|
+
\E\big| h(\phi_k)- h(\phi)\big|,
\end{multline*}
where $\E$ and $\E^n$ denote the expectations with respect to $\Q$ and $\Q^n$ respectively.
Since $h$ is continuous and $\phi_k\to \phi$, $\Q$-a.s., the last term vanishes as $k\to\infty$ by the dominated convergence.
Moreover, since for any fixed $k$,
$\phi_k$ is $\Q$-a.s. continuous and $\Q^n\xrightarrow{w}\Q$ as $n\to\infty$,
$$
\lim_n \big|\E^n h(\phi_k)-\E h(\phi_k)\big|=0, \quad \forall k.
$$
Consequently,
$$
\varlimsup_k\varlimsup_n \big|\E^n h(\phi_{\alpha_n})-\E h(\phi)\big|\le
\varlimsup_k\varlimsup_n\E^n\big| h(\phi_{\alpha_n})- h(\phi_k)\big|=0
$$
where the latter equality holds by \eqref{iii}.
The claim follows by arbitrariness of $h$.
\end{proof}

Let us now outline the plan  of the proof.
In our context, the probability measures, induced by the family  $X^\delta$, play the role of $\Q^\delta$
and by Proposition \ref{prop} they converge weakly to the law $\Q:=\P_x$ of the diffusion $X$.
Since the diffusion coefficient of \eqref{cev} is positive, away from the boundary point 0,
$\tau_\eps(\cdot)$  is  a $\P_x$-a.s. continuous functional (Lemma \ref{lem1}) and hence \eqref{i} of Proposition \ref{lem:Bil}
holds.
On the other hand, $\lim_{\eps\to 0}\tau_\eps(u)= \tau_0(u)$ for any $u\in C([0,\infty),\Real)$ (Lemma \ref{lem2}),
which implies \eqref{ii} of Proposition \ref{lem:Bil}. The more intricate part is the convergence \eqref{iii}, which is
verified in Lemma \ref{lem3}, using the particular structure of the SDE \eqref{cev}.
The statement of Theorem \ref{thm} then follows from Proposition \ref{lem:Bil}.

\vskip 0.1in

The following result is essentially proved in \cite{AKL10}:
\begin{prop}\label{prop}
The processes $(X^\delta)$, defined by \eqref{em} and \eqref{lin},
converge weakly to the diffusion $X$, defined by \eqref{cev}, as $\delta\to 0$.
\end{prop}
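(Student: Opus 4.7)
The plan is essentially to cite the main weak-convergence theorem of \cite{AKL10}, where the same CEV diffusion \eqref{cev} is approximated by an Euler-Maruyama scheme of the same flavor as \eqref{em}--\eqref{lin}. Before invoking it one must only verify that the positivity truncation used there (typically a symmetrization or an absolute-value trick) is compatible with the $(\cdot)^+$ convention adopted in \eqref{em}, so that the laws of the two schemes are asymptotically equivalent in the weak topology on $(C([0,\infty),\Real),\varrho)$; this is routine because both variants freeze the recursion once a grid value falls to $0$, and the piecewise linear interpolation \eqref{lin} extends it to a canonical continuous modification.

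For completeness, the overall structure of the argument in \cite{AKL10} follows the classical Stroock--Varadhan route. First, derive uniform second-moment bounds $\sup_\delta \E\sup_{t\le T}|X^\delta_t|^2<\infty$ by combining a Gronwall estimate on the drift with a discrete Burkholder-Davis-Gundy inequality on the martingale part, using $x^{2p}\le 1+x^2$ for $p<1$. Second, deduce tightness in $(C([0,\infty),\Real),\varrho)$ from a Kolmogorov/Aldous increment estimate. Third, apply Taylor's formula to a test function $f\in C_c^2(\Real)$ along the recursion \eqref{em} to write
$$
f(X^\delta_{t_j})-f(x)=\delta\sum_{i\le j}\LL^\delta f(X^\delta_{t_{i-1}})+M^{\delta,f}_{t_j}+R^{\delta,f}_{t_j},
$$
with $\LL^\delta f(y):=\mu y^+ f'(y)+\tfrac12\sigma^2 (y^+)^{2p}f''(y)$, $M^{\delta,f}$ a discrete martingale and a remainder $R^{\delta,f}$ controlled by $\E\xi_j^4<\infty$. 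Finally, pass to the limit along a weakly convergent subsequence and identify the limit with the unique solution of the martingale problem for $\LL f(x)=\mu x f'(x)+\tfrac12\sigma^2 x^{2p}f''(x)$, invoking Proposition~1 of \cite{AKL10} for well-posedness.

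The genuine difficulty, which is precisely why generic convergence theorems such as those in \cite{KP} or \cite{MT04} do not apply, will be the identification step: one must show that the discrete quadratic variation $\sum_{i\le j}\sigma^2(X^\delta_{t_{i-1}})^{+2p}\delta$ converges in probability to $\int_0^t\sigma^2 X_s^{2p}\,ds$, uniformly on compacts, despite $x\mapsto x^p$ failing to be Lipschitz at the absorbing boundary. In \cite{AKL10} this is handled by exploiting the specific CEV structure, via a Yamada--Watanabe type pathwise uniqueness argument tailored to $p\in[1/2,1)$. This non-Lipschitz degeneracy at $0$ is the technical heart of Proposition~\ref{prop} and, ultimately, the reason why the absorption-time result of Theorem~\ref{thm} requires the mollification by $\delta^\beta$ rather than stopping at $0$ directly.
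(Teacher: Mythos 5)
Your proposal takes essentially the same route as the paper: the paper's proof consists entirely of citing Theorem 1.1 of \cite{AKL10} (stated there for the piecewise constant interpolation) and remarking that the extension to the piecewise linear interpolation \eqref{lin} is straightforward. Your additional reconstruction of the internal argument of \cite{AKL10} is supererogatory (and partly speculative about which machinery that paper actually uses), but it does not affect the correctness of the citation-based proof.
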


\begin{proof}
For the process, obtained by piecewise constant
interpolation of the points generated by the recursion \eqref{em}, the claim is established in
Theorem 1.1 in \cite{AKL10}. The extension to the piecewise linear interpolation
\eqref{lin} is straightforward.
\end{proof}

\begin{lem}[Proposition 4.2. in \cite{E79}] \label{lem1}
For all $x\ge \eps>0$, $\tau_\eps(\cdot)$ is a $\P_x$-a.s. continuous functional on $C([0,\infty),\Real)$.
\end{lem}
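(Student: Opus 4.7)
My plan is to leverage the continuity criterion recalled in the introduction and to reduce the claim to the regularity of $\eps$ as an interior point of the CEV diffusion. From the introduction, $\tau_\eps(\cdot)$ is continuous at every $u \in C([0,\infty),\Real)$ with $u_0 \ge \eps$ such that $\tau_{\eps-}(u) = \tau_\eps(u)$. A separate, easy observation handles the case $\tau_\eps(u) = \infty$: if $u_0 > \eps$ and $u$ never touches $\eps$, then by continuity of $u$ one has $\inf_{t\le N} u_t > \eps$ for each finite $N$, so any sequence $u^j$ with $\varrho(u^j, u) \to 0$ eventually satisfies $\tau_\eps(u^j) > N$, whence $\tau_\eps(u^j) \to \infty$ in the metric $\rho$ on $\bar\Real_+$. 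It therefore suffices to show that $\P_x$-a.s.\ every trajectory of $X$ falls into one of these two categories.

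On the event $\{\tau_\eps(X) < \infty\}$, path continuity gives $X_{\tau_\eps(X)} = \eps$, and the strong Markov property applied at $\tau_\eps(X)$ reduces the downcrossing requirement to the single assertion
\[
\P_\eps\bigl(\inf\{t > 0 : X_t < \eps\} = 0\bigr) = 1,
\]
which is the regularity of $\eps$ as an interior point of the diffusion \eqref{cev}. Since $\sigma\eps^p > 0$, a scale function $s$ for \eqref{cev} is $C^2$ in a neighborhood of $\eps$ with $s'(\eps) > 0$, so stopping at the exit from such a neighborhood, $s(X) - s(\eps)$ is a continuous local martingale and hence a time-changed Brownian motion by Dambis-Dubins-Schwarz. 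Brownian motion visits $(-\infty,0)$ in every neighborhood of time zero almost surely (Blumenthal's 0-1 law together with $\P(B_t < 0) = 1/2$), and transferring back through $s$ produces the required immediate oscillation of $X$ below $\eps$.

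The main obstacle is this regularity step; modulo it, the rest is a routine unwinding of the continuity criterion. The argument is classical and essentially matches Proposition 4.2 of \cite{E79}, the only CEV-specific input being smoothness and non-degeneracy of the coefficients of \eqref{cev} on $(0,\infty)$, which is immediate.
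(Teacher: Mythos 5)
Your proposal is correct, and its skeleton coincides with the paper's: first the deterministic continuity of $\tau_\eps$ at every path $u$ with $u_0\ge\eps$ and $\tau_{\eps-}(u)=\tau_\eps(u)$ (with the $\tau_\eps(u)=\infty$ case handled directly), then the strong Markov reduction of the a.s.\ statement to the single regularity claim $\P_\eps(\tau_{\eps-}=0)=1$. Where you genuinely diverge is in proving that claim. You take the classical route: localize near $\eps$, where the scale function $s$ is smooth with $s'>0$, represent the stopped local martingale $s(X)-s(\eps)$ as a time-changed Brownian motion via Dambis--Dubins--Schwarz, and invoke Blumenthal's $0$--$1$ law; the one point worth making explicit is that the time change $\int_0^{t\wedge T}(s'(X_u))^2\sigma^2X_u^{2p}\,du$ is strictly positive for every $t>0$ (true, since the integrand is bounded away from zero on the chosen neighborhood and the exit time $T$ is a.s.\ positive), so the Brownian excursions below zero are actually realized by $X$. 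The paper instead uses the global scale function $f$ with $f(0)=0$, increasing, so that $f(X)$ is a nonnegative local martingale and hence a supermartingale; optional stopping at $t\wedge\tau_{\eps-}$ combined with $X_{t\wedge\tau_{\eps-}}\ge\eps$ forces $X$ to be frozen at $\eps$ up to $\tau_{\eps-}$, contradicting the positivity of $[X,X]_{\tau_{\eps-}}=\int_0^{\tau_{\eps-}}\sigma^2X_u^{2p}\,du$ on $\{\tau_{\eps-}>0\}$. Your version is the standard one and transfers verbatim to any diffusion locally elliptic at the level $\eps$, at the price of importing DDS and the $0$--$1$ law; the paper's is more self-contained, needing only optional stopping for supermartingales and the explicit bracket. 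One cosmetic remark: the continuity criterion you quote is only asserted, not proved, in the introduction, so a fully self-contained write-up should include the short deterministic argument the paper supplies for it.
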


\begin{proof}
We shall prove the claim for completeness and the reader's convenience.
For $u\in C([0,\infty),\Real)$ and $\eps>0$, let
$$
\tau_{\eps-}(u):=\inf\{t\ge 0: u_t<\eps\}
$$
and define
$$
A:=\left\{u\in C([0,\infty), \Real):  \tau_\eps(u) = \tau_{\eps-}(u)\right\}.
$$
We shall first show that $u\mapsto \tau_\eps(u)$ is continuous on $A$, i.e. that
\begin{equation}
\label{eq:3.1}
\varrho(u^n, u)\to 0\quad \implies\quad \rho\big(\tau_\eps(u^n), \tau_\eps(u)\big)\to 0,\quad \forall u\in A,
\end{equation}
and then check that
\begin{equation}
\label{eq:3.1'}
\P_x(X\in A)=1\quad \forall \; x\ge \eps.
\end{equation}

To this end, note that if $u\in A$ and  $\tau_\eps(u)=\infty$, then $\min_{t\le T}u_t>\eps$ for all $T>0$
(recall $u_0\ge \eps$ for $u\in A$).  Thus $\varrho(u^n,u)\to 0$
implies $\lim_n \min_{t\le T} u^n_t >\eps$ and hence $\varliminf_{n}\tau_\eps(u^n)\ge T$. Since $T$ is arbitrary, $\lim_n \tau_\eps(u^n)=\infty$,
i.e.  \eqref{eq:3.1} holds.

Now take $u\in A$, such that  $\tau_\eps(u)<\infty$. If  $\tau_\eps(u)=0$, the claim obviously holds by continuity of $u$. If $\tau_\eps(u)>0$,  fix an $\eta>0$ such that $\tau_\eps(u)-\eta>0$.
Since
$$
\min_{ t\le \tau_\eps(u)-\eta} u_t >\eps
\quad\text{and}\quad
\sup_{t\le \tau_\eps(u)-\eta}|u^n_t-u_t|\xrightarrow{n\to\infty} 0
$$
we have
$
\lim_n \min_{ t\le \tau_\eps(u)-\eta} u^n_t >\eps
$
and thus $\varliminf_n \tau_\eps(u^n)\ge \tau_\eps(u)-\eta$.
On the other hand, as $\tau_\eps(u)=\tau_{\eps-}(u)$, for any $\eta>0$, there is an $r\le \tau_\eps(u)+\eta$, such that $u_r<\eps$.
It follows that $\lim_n u_n(r)<\eps$ and hence $\varlimsup_n \tau_\eps(u_n)\le \tau_\eps(u)+\eta$.
By arbitrariness of $\eta$, we conclude that $\lim_n \tau_\eps(u_n)=\tau_\eps(u)$ and \eqref{eq:3.1} holds.

It is left to show that \eqref{eq:3.1'} holds. The diffusion $X$ satisfies the strong Markov property and thus (we write
$\tau_\eps$ for $\tau_\eps(X)$),
$$
\P_x(A)=\P_x\big(\tau_\eps= \tau_{\eps-}\big)=
\E_x \one{\tau_\eps<\infty}\P_{\eps}( \tau_{\eps-}=0)
+\P_x(\tau_\eps=\infty),
$$
and hence the required claim holds, if $\P_\eps( \tau_{\eps-}=0)=1$.

Take now $f$ to be a scale function of the diffusion $X$, i.e. a solution to the equation
$ \mathcal{L}f=0$,  where $\mathcal{L}$ is the generator of $X$:
\begin{equation}
\label{genX}
\big(\mathcal{L}\psi\big)(x) = \mu x \psi'(x)+ \frac 1 2\sigma^2 x^{2p} \psi''(x), \quad x >0
\end{equation}
It is well known, e.g. \cite{Kle},
that we can take it to be positive and increasing, specifically, for \eqref{genX},
$$
f(x):=\int_0^x \exp\left(-\int_{0}^y\frac{2\mu z}{(\sigma z^{p})^2}dz\right)dy
$$
can be taken.
The process $f(X_t)$ is a nonnegative local martingale and thus a supermartingale (e.g. \cite{Kle} p. 197).
The random variable
$t\wedge \tau_{\eps-}$ is a bounded stopping time and by the optional stopping theorem  we have for any $t\ge 0$
\begin{equation}\label{stopSf}
\E_\eps f(X_{t\wedge \tau_{\eps-}}) \le f(\eps).
\end{equation}
By the definition of $ \tau_{\eps-}$ and path continuity of $X$, it follows that
$X_{t\wedge \tau_{\eps-}}\ge \eps$, and
since $f$ is increasing,  we have that $f(X_{t\wedge \tau_{\eps-}})\ge f(\eps)$. Thus it follows from \eqref{stopSf} that  $P_\eps(X_{t\wedge \tau_{\eps-}}=\eps)=1$ for all $t\ge 0$ and, consequently,
$[X,X]_{t\wedge \tau_{\eps-}}=0$, $\P_\eps$-a.s. for $t\ge 0$.
On the other hand, $[X,X]_{\tau_{\eps-}}=\int_0^{\tau_{\eps-}}\sigma^2 X_s^{2p}ds>0$, on the set  $\big\{\tau_{\eps-}>0\big\}$, $\P_\eps$-a.s.
The obtained contradiction  implies $\P_\eps( \tau_{\eps-}>0)=0$, as claimed.

\end{proof}

\begin{lem}\label{lem2}
$
\lim_{\eps\to 0}\tau_\eps(u)=\tau_0(u)
$
for any $u\in C([0,\infty),\Real_+)$.
\end{lem}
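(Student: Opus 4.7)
The plan is to exploit the monotonicity of $\eps\mapsto \tau_\eps(u)$ together with the continuity of $u$. The lemma as stated implicitly requires $u_0>0$ (the path $u\equiv 0$ with $u_0=0$ gives $\tau_\eps(u)\equiv\infty$ but $\tau_0(u)=0$), which is harmless since in the application $\P_x(X_0=x)=1$ with $x>0$, so I proceed under this hypothesis.

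The first step would be to show that $\tau_\eps(u)$ is nonincreasing in $\eps$ on $(0,u_0]$: for $0<\eps_1\le \eps_2\le u_0$, if $u$ reaches the smaller level $\eps_1$, then by the intermediate value theorem it must first take the intermediate value $\eps_2$, so $\tau_{\eps_2}(u)\le \tau_{\eps_1}(u)$; the case $\tau_{\eps_1}(u)=\infty$ is trivial. This yields the monotone limit
$$
L:=\lim_{\eps\downarrow 0}\tau_\eps(u)=\sup_{\eps\in (0,u_0]}\tau_\eps(u)\in \bar\Real_+,
$$
and convergence in the metric $\rho$ on $\bar\Real_+$ follows automatically from monotone convergence in $[0,\infty]$.

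The second step would be the two-sided comparison $L=\tau_0(u)$. For $L\le \tau_0(u)$: if $\tau_0(u)<\infty$, then $u_{\tau_0(u)}=0$ by path continuity, and IVT on $[0,\tau_0(u)]$ yields a time where $u=\eps$ for every $\eps\in (0,u_0)$, hence $\tau_\eps(u)\le \tau_0(u)$; the case $\tau_0(u)=\infty$ is trivial. For $\tau_0(u)\le L$: if $L<\infty$, then $\tau_\eps(u)\le L+1<\infty$ for all sufficiently small $\eps$, so $u_{\tau_\eps(u)}=\eps$; letting $\eps\downarrow 0$ and using continuity of $u$ at the point $L$ forces $u_L=0$, whence $\tau_0(u)\le L$; the case $L=\infty$ is again trivial.

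I do not anticipate any substantive obstacle: the argument is essentially elementary, and the only point requiring a little care is treating uniformly the cases of finite and infinite $\tau_0(u)$ (respectively $L$), which the compactification metric $\rho$ on $\bar\Real_+$ handles naturally.
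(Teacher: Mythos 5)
Your proof is correct and follows essentially the same elementary route as the paper: a two-sided comparison of $\lim_{\eps\to 0}\tau_\eps(u)$ with $\tau_0(u)$ using the intermediate value theorem for the upper bound and path continuity for the lower bound (the paper phrases the lower bound via $\inf_{t\le \tau_0(u)-\eta}u_t>0$ rather than via $u_L=0$, but this is a cosmetic difference). Your observation that the statement implicitly requires $u_0>0$ is a fair one --- the lemma as literally stated fails for $u\equiv 0$, and the paper's step ``$\tau_\eps(u)\le \tau_0(u)$ for small $\eps$'' silently uses $u_0>0$ --- but, as you note, this is harmless since the lemma is only applied under $\P_x$ with $x>0$.
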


\begin{proof}
If $\tau_0(u)=\infty$, then $\min_{t\le T} u_t>0$ and hence $\varliminf_{\eps\to 0}\tau_\eps(u)\ge T$ and the claim follows,
since $T$ is arbitrary. If $\tau_0(u)<\infty$, then for an $\eta>0$, $\inf_{t\le \tau_0(u)-\eta}u_t>0$ and thus
$\varliminf_{\eps\to 0}\tau_\eps(u)\ge \tau_0(u)-\eta$. For sufficiently small $\eps>0$,  $\tau_\eps(u)\le \tau_0(u)$ and the claim follows.
\end{proof}

Let $\PP$ be the probability on the space, carrying the sequence $(\xi_j)$ (see \eqref{em})
and denote by $(\PP_x)_{x\ge 0}$ the Markov family of probabilities, corresponding to  the discrete time process
$(X^\delta_{t_j})$ with  $\PP_x(X^\delta_0=x)=1$. Since the process $(X^\delta_t)$ is piecewise linear off the grid $\delta \mathbb{Z}_+$,
the condition \eqref{iii} of Proposition \ref{lem:Bil} follows from

\begin{lem}\label{lem3}
For any  $\beta \in \left(0,\frac{1/2}{1-p}\right)$  and  $\eta>0$
\begin{equation}\label{eq:2.5}
 \lim_{\eps\to 0}\varlimsup_{\delta\to 0}
\PP_x\Big(\rho\big(\tau_\eps(X^\delta), \tau_{\delta^\beta}(X^\delta)\big)> \eta\Big)=0.
\end{equation}
\end{lem}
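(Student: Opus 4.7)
The plan is to combine a strong Markov reduction at the first grid time the chain $(X^\delta_{t_j})$ drops to or below $\eps$ with a discrete Lyapunov bound, the exponent $2(1-p)$ being the one dictated by the CEV scaling near the origin.

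First, set $\sigma^\eps:=\inf\{j\ge 0: X^\delta_{t_j}\le \eps\}$. Since $X^\delta$ is piecewise linear between grid points, on $\{\tau_\eps(X^\delta)<\infty\}$ one has $\tau_\eps(X^\delta)\in[t_{\sigma^\eps-1},t_{\sigma^\eps}]$; if the same step already brings the chain to $X^\delta_{t_{\sigma^\eps}}\le \delta^\beta$ --- in particular whenever $X^\delta_{t_{\sigma^\eps}}<0$, because \eqref{em} freezes the chain at its first negative value --- then by the linearity \eqref{lin} the path also crosses $\delta^\beta$ in the same interval and $|\tau_\eps(X^\delta)-\tau_{\delta^\beta}(X^\delta)|\le\delta$. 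On $\{\tau_\eps(X^\delta)=\infty\}$ automatically $\tau_{\delta^\beta}(X^\delta)=\infty$ for small $\delta$, so $\rho\equiv 0$. The remaining case $\{X^\delta_{t_{\sigma^\eps}}\in(\delta^\beta,\eps]\}$ is treated by the strong Markov property of the chain, which reduces \eqref{eq:2.5} to
$$
\lim_{\eps\to 0}\varlimsup_{\delta\to 0}\sup_{y\in(\delta^\beta,\eps]}\PP_y\big(\tau_{\delta^\beta}(X^\delta)>\eta\big)=0.
$$

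Second, I would deduce this from a bounded-horizon moment estimate $\sup_{y\le\eps}\E_y[\tau_{\delta^\beta}(X^\delta)\wedge M]\le C\eps^{2(1-p)}$ combined with Markov's inequality, using that $\rho(s,\infty)=\pi/2-\arctan s$ makes large times irrelevant. The Lyapunov function suggested by the continuous generator \eqref{genX} is $V(x):=x^{2(1-p)}$ for $p\in(1/2,1)$, with the logarithmic variant $V(x):=(2/\sigma^2)x\log(1/x)$ in the borderline case $p=1/2$; a direct computation gives
$$
\LL V(x)=2(1-p)\mu x^{2-2p}+(1-p)(1-2p)\sigma^2,
$$
and $(1-p)(1-2p)<0$ when $p>1/2$, so $\LL V(x)\le -c<0$ uniformly on $[0,2\eps]$ for $\eps$ small enough. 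A discrete Dynkin identity and optional stopping at the first grid index $T$ at which the chain either drops to $\delta^\beta$ or rises above $2\eps$ would then yield $\E_y[T\delta]\le V(y)/c\le C\eps^{2(1-p)}$; a scale-function argument of the type used in Lemma \ref{lem1} controls the probability of escape upward through $2\eps$ before touching $\delta^\beta$, and geometric iteration over such excursions keeps the overall hitting-time moment of the same order.

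Third, the main obstacle is upgrading the continuous inequality $\LL V\le -c$ to its discrete Euler--Maruyama counterpart
$$
\E\big[V(X^\delta_{t_j})-V(X^\delta_{t_{j-1}})\,\big|\,X^\delta_{t_{j-1}}=x\big]\le -c\delta,\qquad x\in(\delta^\beta,2\eps].
$$
Taylor-expanding $V(x+h)$ around $x$ with $h=\mu x\delta+\sigma x^p\sqrt\delta\,\xi_j$ recovers $\LL V(x)\delta$ to leading order provided $|h|/x$ is small, and the restriction $\beta<\tfrac{1/2}{1-p}$ is exactly what makes the relative step $\sigma\sqrt\delta\,x^{p-1}|\xi_j|\le\sigma\delta^{1/2-\beta(1-p)}|\xi_j|$ tend to zero in probability uniformly on $x\ge\delta^\beta$. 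I would split the conditional expectation on $\{|\xi_j|\le\delta^{-\gamma}\}$ for a small $\gamma>0$: on this bulk the cubic Taylor remainder is of order $\delta^{3/2-3\beta(1-p)-3\gamma}=o(\delta)$, while the Gaussian complement has super-polynomially small probability $e^{-\Omega(\delta^{-2\gamma})}$ that absorbs any polynomial losses from the a priori bounds on $V$ and its derivatives. Carrying this out uniformly in $x\in(\delta^\beta,2\eps]$ is the single genuinely technical point; everything else follows from Proposition \ref{lem:Bil}, the strong Markov property and Markov's inequality, in conjunction with Lemmas \ref{lem1}, \ref{lem2} and Proposition \ref{prop}.
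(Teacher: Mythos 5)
Your skeleton --- a strong Markov reduction to $\sup_{y\le\eps}\PP_y\big(\tau_{\delta^\beta}(X^\delta)>\eta\big)$, a discrete Dynkin inequality for a function $V$ with $\mathcal{L}V\le -c$ near the origin, and the observation that $\beta<\tfrac{1/2}{1-p}$ is precisely what keeps the relative Euler step $\sigma\delta^{1/2}x^{p-1}|\xi_j|$ small on $\{x\ge\delta^\beta\}$ --- is essentially the paper's, which uses $\psi(x)=\E_x\,\tau_0\wedge\tau_1$ (the exact solution of $\mathcal{L}\psi=-1$) where you use the explicit $V(x)=x^{2(1-p)}$; since the two have the same blow-up rates of the first three derivatives at $0$, your choice is a legitimate simplification. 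But there is a genuine gap in the exit decomposition. You stop the chain at the exit from $(\delta^\beta,2\eps]$. The probability of exiting \emph{upward} through $2\eps$ from $y\le\eps$ is of order $S(y)/S(2\eps)\approx y/(2\eps)$, which is about $1/2$ for $y$ near $\eps$ --- not small --- so this event cannot be discarded and you must iterate, as you propose. The iteration, however, passes through excursions of the process \emph{above} $2\eps$, where your Lyapunov function says nothing: the return time from $2\eps$ to $[0,\eps]$ is an $O(1)$ quantity governed by the global behaviour of the diffusion, and for $\mu>0$ it is infinite with positive probability, so the claim that ``geometric iteration keeps the overall hitting-time moment of the same order'' fails. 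Already for $p=1/2$, $\mu=0$ the exact law \eqref{Pfla} gives $\E_y[\tau_0\wedge M]\asymp y\log(M/y)$, not $O(y^{2(1-p)})$. The repair is what the paper does: take the upper barrier to be a \emph{fixed} level ($1$ in the paper), so that the mean exit time from $(\delta^\beta,1]$ is still $o(1)$ as $\eps\to 0$ by the same Dynkin argument (this is \eqref{une}), while the upward-escape event $\{\nu_{\delta^\beta}>\nu_1\}$ now has probability controlled by the harmonic function $\varphi(y)=\P_y(\tau_0>\tau_1)=O(\eps)$ and is simply discarded; controlling it needs a second discrete Dynkin inequality, for $\varphi$, which is \eqref{doux}.

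A second, smaller reservation: the step you defer as ``the single genuinely technical point'' is where most of the paper's proof actually lives, and your sketch of it misses the real obstacle. The difficulty is not the size of $|\xi_j|$ per se but the location of the intermediate point $\tilde x$ in the third-order Taylor remainder: $V'''(\tilde x)\sim \tilde x^{-2p-1}$, and on a downward step $\tilde x$ can be far smaller than $X^\delta_{t_{j-1}}\ge\delta^\beta$ (the next iterate of \eqref{em} can even be negative). Truncating on $\{|\xi_j|\le\delta^{-\gamma}\}$ for a small $\gamma>0$ does not keep $\tilde x$ bounded below; one needs the sharper threshold $|\xi_j|\le\delta^{\frac12(\beta(1-p)-\frac12)}$ together with the algebraic estimate of $\tilde x^{-2p-1}x^{3p}$ on downward steps developed in \eqref{this}--\eqref{eq:2.15}. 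So the proposal is on the right track, but both the architecture of the exit decomposition and the key remainder estimate need to be reworked before it constitutes a proof.
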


\begin{proof}

Roughly speaking, \eqref{eq:2.5} means that a trajectory of $X^\delta$, which approaches the boundary $\{0\}$,
is very likely to hit it. This seemingly plausible statement is not at all obvious, since the coefficients of
our diffusion decrease to zero near this boundary, making it hard to reach. By letting the level $\delta^\beta$
decrease to zero at a particular rate allows to approximate expectations of the
hitting times $\tau_{\delta^\beta}(X^\delta)$ by those of $\tau_{\delta^\beta}(X)$, which in turn can be estimated
using their relations to the corresponding boundary value problems.

In what follows, $C$, $C_1$,$C_2$ etc. denote  unspecified constants, independent of $\eps$ and $\delta$, which   may  be different in each appearance.
Define the crossing times of level $a$
$$
\nu_{a} = \delta\; \inf\left\{j\ge 1: a\in \big(X^\delta_{t_{j-1}},X^\delta_{t_j}\big] \right\}, \quad a\in \Real_+,
$$
with $\inf\{\emptyset\}=\infty$.
The sequence  $(X^\delta_{t_j})$, $j\in \mathbb{Z}_+$ is a strong Markov process and
$\nu_a/\delta$ is a stopping time with respect to its natural filtration.

Since $X^\delta$ is piecewise linear,
$$
|\tau_\eps(X^\delta)-\nu_\eps|\le \delta \quad \text{and}\quad |\tau_{\delta^\beta}(X^\delta)-\nu_{\delta^\beta}|\le \delta
\quad \text{on the set\ } \left\{\tau_\eps(X^\delta)<\infty\right\}
$$
and thus by the triangle inequality
$$
\rho\big(\tau_\eps(X^\delta), \tau_{\delta^\beta}(X^\delta)\big)\le
\rho\big(\nu_\eps, \nu_{\delta^\beta}\big) +2\delta.
$$
Since
$
\tau_\eps(X^\delta)\le \tau_{\delta^\beta}(X^\delta)
$
for  $x\ge \eps\ge \delta^\beta$,
it follows
\begin{align*}
&\PP_x\Big(\rho\big(\tau_\eps(X^\delta), \tau_{\delta^\beta}(X^\delta)\big)> \eta\Big)=
\PP_x\Big(\rho\big(\tau_\eps(X^\delta), \tau_{\delta^\beta}(X^\delta)\big)> \eta, \tau_\eps(X^\delta)<\infty\Big)\le  \\
&\PP_x\Big(\rho\big(\nu_\eps, \nu_{\delta^\beta}\big)> \eta-2\delta, \nu_\eps <\infty\Big)=
\EE_x \one{\nu_\eps <\infty}\PP_{X^\delta_{\nu_\eps}}\Big(\rho\big(0, \nu_{\delta^\beta}\big)> \eta-2\delta\Big) \le\\
&\sup_{z\in [0,\eps]}\PP_{z}\big(\nu_{\delta^\beta}> \eta'\big),
\end{align*}
where $\eta':= \tan(\eta/2)$ (assuming $\delta$ is small enough).
Further,
\begin{align*}
&\PP_{z}\big(\nu_{\delta^\beta}> \eta'\big)=
\PP_{z}\big(\nu_{\delta^\beta}> \eta', \nu_{1}>\eta'\big)
+
\PP_{z}\big(\nu_{\delta^\beta}> \eta', \nu_{1}\le \eta'\big) \le \\
&\PP_{z}\big(\nu_{\delta^\beta}\wedge  \nu_{1}>\eta'\big)
+
\PP_{z}\big(\nu_{\delta^\beta}>\nu_{1}\big)\le \frac 1 {\eta'}
\EE_z\big(\nu_{\delta^\beta}\wedge  \nu_{1}\big)
+
\PP_{z}\big(\nu_{\delta^\beta}>\nu_{1}\big),
\end{align*}
and thus \eqref{eq:2.5} holds, if we show
\begin{equation}
\label{une}
\lim_{\eps\to 0}\varlimsup_{\delta\to 0}\sup_{z\in [0,\eps]}\EE_z\Big(\nu_{\delta^\beta}\wedge  \nu_{1}\Big)=0,
\end{equation}
and
\begin{equation}
\label{doux}
\lim_{\eps\to 0}\varlimsup_{\delta\to 0}\sup_{z\in [0,\eps]}\PP_{z}\Big(\nu_{\delta^\beta}>\nu_{1}\Big)=0.
\end{equation}

\subsubsection*{Proof of \eqref{une}}

We shall use the regularity properties of the function
$$\psi(x):=\E_x \tau_0(X) \wedge \tau_{1}(X), \quad x\in [0,1]$$ near the boundary point 0,
summarized in the Appendix \ref{A}.  In particular, $\psi$ is continuous on the interval
$[0,1]$ and is smooth on $(0,1]$. Note, however, that the derivatives of $\psi$ explode at the boundary point $0$,
which is related to the possibility of absorption.
We shall extend the domain of $\psi$ to the whole $\Real$
by continuity, setting $\psi(x)=0$ for $x\in \Real\setminus [0,1]$.

Consider the Taylor expansion
\begin{align*}
\psi(X^\delta_{t_n})-\psi(X^\delta_0) =& \sum_{j=1}^n \psi'(X^\delta_{t_{j-1}})\big(\mu X^\delta_{t_{j-1}}\delta +
\sigma \big(X^\delta_{t_{j-1}}\big)^p \xi_j \sqrt{\delta}
\big) +\\
&
\sum_{j=1}^n \frac 1 2 \psi''(X^\delta_{t_{j-1}})\big(\mu X^\delta_{t_{j-1}}\delta +
\sigma  \big(X^\delta_{t_{j-1}}\big)^p \xi_j \sqrt{\delta}
\big)^2 + \\
&
\sum_{j=1}^n \frac 1 {3!} \psi'''(\tilde X^\delta_{t_{j-1}})\big(\mu X^\delta_{t_{j-1}}\delta +
\sigma  \big(X^\delta_{t_{j-1}}\big)^p \xi_j \sqrt{\delta}
\big)^3
\end{align*}
where $\tilde X^\delta_{t_{j-1}}$ is between $X^\delta_{t_{j-1}}$ and $X^\delta_{t_j}$. After rearranging terms, the latter reads
$$
\psi\big(X^\delta_{t_n}\big) - \psi(X^\delta_0) =
\sum_{j=1}^n
\big(\mathcal{L} \psi\big)\big(X^\delta_{t_{j-1}}\big)\delta + M_n + R_n
$$
where $\mathcal{L}$  is the generator  defined in \eqref{genX},
the second term is the martingale
$$
M_n := \sum_{j=1}^n \bigg(
\sigma \big(X^\delta_{t_{j-1}}\big)^p \psi'\big(X^\delta_{t_{j-1}}\big)\xi_j \sqrt{\delta}
+\frac 1 2 \sigma^2 \big(X^\delta_{t_{j-1}}\big)^{2p}\psi''\big(X^\delta_{t_{j-1}}\big)\big(\xi_j^2-1\big)\delta
\bigg),
$$
and the last term is the residual
\begin{multline*}
R_n = \sum_{j=1}^n
\frac 1 2 \psi''(X^\delta_{t_{j-1}})\mu^2 \big(X^\delta_{t_{j-1}}\big)^2\delta^2+
\sum_{j=1}^n \psi''(X^\delta_{t_{j-1}})\mu \sigma  \big(X^\delta_{t_{j-1}}\big)^{p+1} \xi_j \delta^{3/2}
+
\\
\sum_{j=1}^{n}   \frac 1 {3!}\psi'''\big(\tilde X^\delta_{t_{j-1}}\big)
\Big(\mu X^\delta_{t_{j-1}}\delta + \sigma \big(X^\delta_{t_{j-1}}\big)^p\xi_j\sqrt{\delta}\Big)^3:=
R^{(1)}_n + R^{(2)}_n+R^{(3)}_n.
\end{multline*}
Consequently, for an integer $k\ge 1$ and the stopping time $\nu:= \nu_{\delta^\beta}\wedge \nu_1$,
\begin{multline}
\label{eq:2.9}
\psi\big(X^\delta_{\nu \wedge k \delta}\big) - \psi(X^\delta_0) =
\sum_{j=1}^{(\nu /\delta\wedge k)-1 }
\big(\mathcal{L} \psi\big)\big(X^\delta_{t_{j-1}}\big)\delta +\\
 M_{\nu/\delta \wedge k} +
R_{(\nu/\delta \wedge k)-1} +r(\delta),
\end{multline}
where
$$
r(\delta):=
\psi\big(X^\delta_{\nu\wedge k\delta }\big) -\psi\big(X^\delta_{(\nu\wedge k\delta) -\delta}\big)
-\Big(M_{\nu/\delta \wedge k}-M_{(\nu/\delta \wedge k)-1}\Big),
$$
is the residual term, which accommodates the possible overshoot at the terminal crossing time $\nu$.

Recall that $\mathcal{L}\psi = -1$ for $x\in (0,1)$ and hence
\begin{equation}
\label{Sbnd}
\sum_{j=1}^{\nu/\delta\wedge k-1}
\big(\mathcal{L} \psi\big)(X^\delta_{t_{j-1}}) \delta = - (\nu\wedge k\delta)+\delta.
\end{equation}
By Lemma \ref{lem:A2}, $\sup_{x\in (0,1)}\Big(x^p|\psi'(x)|+x^{2p} |\psi''(x)|\Big)\le C$ and hence  the increments of $M_n$ satisfy
$$
\EE_z\Big(|M_j-M_{j-1}|\big|\F^{\xi}_{j-1}\Big) \le C<\infty, \quad \text{on\ } \{j\le \nu/\delta\wedge k\}
$$
and  by the optional stopping theorem (Theorem 2 of  \S 2, Ch. VII, \cite{Sh}) $\EE_z M_{\nu/\delta\wedge k} =0$ for $z\in [0,1]$.

Now we shall bound the residual terms in \eqref{eq:2.9}. By Lemma \ref{lem:A2},
$$\sup_{x\in (0,1)}x^2|\psi''(x)|<\infty$$ and hence
\begin{equation}
\label{R1}
\EE_z\Big|R^{(1)}_{\nu/\delta\wedge  k-1}\Big|\le
\EE_z\sum_{j=1}^{\nu/\delta \wedge k}
 \frac 1 2 \big|\psi''(X^\delta_{t_{j-1}})\big|\mu^2 \big(X^\delta_{t_{j-1}}\big)^2\delta^2\le C_1 \delta
 \EE_z (\nu \wedge k\delta ).
\end{equation}
Similarly,  $\sup_{x\in (0,1)} x^{p+1} |\psi''(x)|<\infty$ and  by Corollary \ref{CorB} (applied with $r=0$)
\begin{multline}\label{R2}
\EE_z \Big|R^{(2)}_{\nu/\delta\wedge  k-1}\Big|\le
\EE_z \sum_{j=1}^{\nu/\delta \wedge k}
\big|\psi''(X^\delta_{t_{j-1}})\big|\mu \sigma  \big(X^\delta_{t_{j-1}}\big)^{p+1} |\xi_j| \delta^{3/2}
\le\\
C_2 \EE_z
\sum_{j=1}^{\nu/\delta \wedge k}\big|
 \xi_j
\big|\delta^{3/2} \le
C_2 \sqrt{\delta}\Big(\EE_z
(\nu \wedge k\delta) +\delta\Big).
\end{multline}
To bound $\big|R^{(3)}_{\nu/\delta\wedge  k-1}\big|$, note that
by Lemma \ref{lem:A2},
$$
\big|\psi'''\big(\tilde X^\delta_{t_{j-1}}\big)\big|\le C_3\big|\tilde X^\delta_{t_{j-1}}\big|^{-2p-1}
$$
and thus
\begin{multline}\label{eq:2.12}
\Big|R^{(3)}_{\nu/\delta\wedge  k-1}\Big|\le
\sum_{j<\nu/\delta \wedge k}   \big|\psi'''\big(\tilde X^\delta_{t_{j-1}}\big)\big|
\Big|\mu X^\delta_{t_{j-1}}\delta + \sigma \big(X^\delta_{t_{j-1}}\big)^p\xi_j\sqrt{\delta}\Big|^3
\le \\
4\sum_{j<\nu/\delta \wedge k}    \big|\psi'''\big(\tilde X^\delta_{t_{j-1}}\big)\big|\bigg(
\Big|\mu X^\delta_{t_{j-1}}\delta\Big|^3 + \Big|\sigma \big(X^\delta_{t_{j-1}}\big)^p\xi_j\sqrt{\delta}\Big|^3
\bigg)\le \\
C_4 \sum_{j<\nu/\delta \wedge k}    \big(\tilde X^\delta_{t_{j-1}}\big)^{-2p-1}
\big( X^\delta_{t_{j-1}}\big)^{3p}(|\xi_j|^3+1)\delta^{3/2},
\end{multline}
where the latter inequality holds, since $ X^\delta_{t_{j-1}}\le 1$ on the set $\{j<\nu/\delta \wedge k \}$.
Since $\tilde X^\delta_{t_{j-1}}$ is between $X^\delta_{t_{j-1}}$ and $X^\delta_{t_j}$
\begin{equation}
\label{eq:2.13}
\big(\tilde X^\delta_{t_{j-1}}\big)^{-2p-1}\le
\big( X^\delta_{t_{j-1}}\big)^{-2p-1}
\vee
\big( X^\delta_{t_{j}}\big)^{-2p-1}.
\end{equation}
For $j< \nu/\delta$, we have $X^\delta_{t_{j-1}}\ge \delta^\beta$  and
thus on the set $\big\{X^\delta_{t_{j-1}}\le X^\delta_{t_j}\big\}$,
\begin{multline}
\label{eq:2.14}
\big(\tilde X^\delta_{t_{j-1}}\big)^{-2p-1}
\big( X^\delta_{t_{j-1}}\big)^{3p}\le
\big(X^\delta_{t_{j-1}}\big)^{-2p-1}
\big( X^\delta_{t_{j-1}}\big)^{3p}
=  \\
\big( X^\delta_{t_{j-1}}\big)^{-(1-p)}\le
\delta^{-\beta(1-p)}.
\end{multline}
On the set $\big\{X^\delta_{t_{j-1}}\ge  X^\delta_{t_j}\big\}$ we have
\begin{multline}\label{this}
\big(\tilde  X^\delta_{t_{j-1}}  \big)^{-2p-1}
 \big(X^\delta_{t_{j-1}}\big)^{3p}\le
 \big( X^\delta_{t_{j}}\big)^{-2p-1}
\big(X^\delta_{t_{j-1}}\big)^{3p}
= \\
\frac
{
\big( X^\delta_{t_{j-1}}\big)^{3p}
}
{\Big( X^\delta_{t_{j-1}}(1+ \mu \delta) + \sigma \big(X^\delta_{t_{j-1}}\big)^p\xi_j\sqrt{\delta}\Big)^{2p+1}}.
\end{multline}
Note that for $x>0$, $a>0$ and $b$, such that  $xa+x^pb>0$,
\begin{multline*}
\frac{x^{3p}}{(xa+x^pb)^{2p+1}}=
\frac{\big(x^{1-p}\big)^{2p}}{(x^{1-p}a+b)^{2p+1}}=
\frac{1}{a^{2p}}\frac{\big(x^{1-p}a+b-b\big)^{2p}}{(x^{1-p}a+b)^{2p+1}}\le \\
\frac{2^{2p-1}}{a^{2p}}\frac{(x^{1-p}a+b)^{2p}+|b|^{2p}}{(x^{1-p}a+b)^{2p+1}}=
\frac{2^{2p-1}}{a^{2p}}\left(
\frac{1}{x^{1-p}a+b}
+
\frac{|b|^{2p}}{(x^{1-p}a+b)^{2p+1}}
\right).
\end{multline*}
Applying this inequality to \eqref{this}  on the set $\Big\{|\xi_j|\le \delta^{\frac 1 2 \big(\beta(1-p)-\frac 1 2\big)}\Big\}$ we get
\begin{align*}
&\big(\tilde  X^\delta_{t_{j-1}}  \big)^{-2p-1}
 \big(X^\delta_{t_{j-1}}\big)^{3p} \le \\
&\frac{C_5}{\big(X^\delta_{t_{j-1}}\big)^{1-p}(1+\mu\delta)+\sigma \xi_j \delta^{1/2}}
+
\frac{C_5|\sigma \xi_j \delta^{1/2}|^{2p}}{\Big(\big(X^\delta_{t_{j-1}}\big)^{1-p}(1+\mu\delta)+\sigma \xi_j \delta^{1/2}\Big)^{2p+1}}\le \\
&
\frac{C_5}{\delta^{\beta(1-p)}(1+\mu\delta)-\sigma \delta^{\frac 1 2 \big(\beta(1-p)+\frac 1 2\big)} }
+
\frac{C_5 \sigma^2 \delta^{p \big(\beta(1-p)+\frac 1 2\big)} }{\Big(\delta^{\beta(1-p)}(1+\mu\delta)-\sigma \delta^{\frac 1 2 \big(\beta(1-p)+\frac 1 2\big)} \Big)^{2p+1}}\le \\
&
C_6 \delta^{-\beta(1-p)} \left(1
+
\delta^{p(1/2-\beta(1-p))}
\right)\le C_7 \delta^{-\beta(1-p)},
\end{align*}
where the  inequalities hold for all sufficiently small $\delta>0$ and
we used the bounds  $X^\delta_{t_{j-1}}\ge \delta^\beta$ and $\beta(1-p)<1/2$.

Consequently, on the set  $\big\{X^\delta_{t_{j-1}}\ge  X^\delta_{t_j}\big\}$
\begin{equation}\label{eq:2.15}
\big(\tilde  X^\delta_{t_{j-1}}  \big)^{-2p-1}
 \big(X^\delta_{t_{j-1}}\big)^{3p}
\le C_7 \delta^{-\beta(1-p)}+
\delta^{-\beta(2p+1)}\One{|\xi_1|\ge \delta^{\frac 1 2 \big(\beta(1-p)-\frac 1 2\big)}}
\end{equation}
Plugging the bounds \eqref{eq:2.15} and \eqref{eq:2.14} into  \eqref{eq:2.12} and
applying the Corollary \ref{CorB}, we obtain the
estimate
\begin{align*}
& \EE_z \Big|R^{(3)}_{\nu/\delta\wedge  k-1}\Big|\le \\
& \EE_z
C_8 \sum_{j<\nu/\delta \wedge k}
\bigg(
 \delta^{-\beta(1-p)}+
\delta^{-\beta(2p+1)}\One{|\xi_1|\ge \delta^{\frac 1 2 \big(\beta(1-p)-\frac 1 2\big)}}
\bigg)(|\xi_j|^3+1)\delta^{3/2}\le \\
&
2 C_8 \EE_z(\nu \wedge k\delta)
 \EE_z
\bigg(
 \delta^{\gamma }+
\delta^{-\frac 3 2 \frac 1 {1-p}}\One{|\xi_1|\ge \delta^{-\frac 1 2 \gamma}}
\bigg)(|\xi_j|^3+1) +C_9 \delta^{3/2},
\end{align*}
where $\gamma = 1/2-\beta(1-p)>0$.
Using the Gaussian tail estimate
$$
\E |\xi_1|^{2m} \one{|\xi_1|\ge a}\le  \sqrt{(4m-1)!!}  \frac 1 {\sqrt{a}} e^{-\frac 1 4 a^2},\quad m\ge 1, \ a>0,
$$
we get
$$
\EE_z \Big|R^{(3)}_{\nu/\delta\wedge  k-1}\Big|\le C_{10} \EE_z(\nu \wedge k\delta ) \delta^{\gamma} + C_{10}\delta^{3/2},
$$
 which
along with \eqref{R1} and \eqref{R2} yields the bound
\begin{equation}\label{Rbnd}
\EE_z \big|R_{\nu/\delta\wedge  k-1}\big| \le C_{11} \EE_z (\nu\wedge k\delta) \delta^{\gamma}+ C_{11}\delta^{3/2}.
\end{equation}
Finally, by Corollary \ref{CorB},
$$
\EE_z\big|\xi_{\nu/\delta\wedge k}\big|\vee \EE_z\big(\xi_{\nu/\delta\wedge k}\big)^2 \le
\EE_{z}\big(\nu/\delta\wedge k\big)^{1/4} + C_{12} \le
\Big(\EE_{z}\big(\nu/\delta\wedge k\big)\Big)^{1/4} + C_{12}
$$
and hence by Lemma \ref{lem:A2},
$$
\EE_z\Big|M_{\nu/\delta \wedge k}-M_{\nu/\delta \wedge k-1}\Big|\le
C_{13}\delta^{1/4} \Big(\EE_z\big(\nu \wedge k \delta\big)\Big)^{1/4}\le
C_{13}\delta^{1/4} \Big(1+\EE_z\big(\nu \wedge k \delta\big)\Big),
$$
and consequently
\begin{equation}
\label{rbnd}
\EE_z r(\delta) \le \psi\big(X^\delta_{\nu\wedge k\delta }\big) +
C_{13}\delta^{1/4} \Big(1+\EE_z\big(\nu \wedge k \delta\big)\Big).
\end{equation}
Plugging the estimates \eqref{Sbnd}, \eqref{Rbnd} and \eqref{rbnd} into \eqref{eq:2.9}, we obtain the bound
$$
\EE_z\big(\nu \wedge k \delta\big)
\Big(1-C\delta^{\gamma}-C \delta^{1/4}\Big)\le C\delta^{1/4} + \psi(z) +\sup_{y\in [0,1]}\psi(y)
 <\infty.
$$
By the monotone convergence, the latter implies
$$
\EE_z \nu\le \frac{C\delta^{1/4} + \psi(z) +\sup_{y\in [0,\delta^\beta]}\psi(y)
 }{1-C\delta^{\gamma}-C \delta^{1/4}},
$$
and by continuity of $\psi$,
$$
\varlimsup_{\delta\to 0}\sup_{z\in [0,\eps]}\EE_z \nu \le \sup_{z\in [0,\eps]}\psi(z)\xrightarrow{\eps\to 0}0,
$$
verifying \eqref{une}.

\subsubsection*{Proof of \eqref{doux}}

Consider the function $\varphi(x):= \P_x\big(\tau_0> \tau_1\big)$, whose domain we extend to the whole real line by continuity,
setting $\varphi(x)=0$ for $x<0$ and $\varphi(x)=1$ for $x> 1$. The process $\varphi(X^\delta_j)$, $j\ge 0$ satisfies the
decomposition \eqref{eq:2.9}, with $\psi$ replaced by $\varphi$.
Taking into account that $(\mathcal{L}\varphi)(x)=0$
for $x\in [0,1]$ and the bounds from the Lemma \ref{lem:A1}, a calculation, similar to the one in the preceding subsection, shows that
$$
\EE_z \varphi(X^\delta_{\nu}) - \varphi(z)\le  \EE_z\Big(\varphi\big(X^\delta_{\nu }\big) -\varphi\big(X^\delta_{\nu -\delta}\big)\Big) + C\delta^{1/4}+
\Big(C\delta^{\gamma}+
C\delta^{1/4}
\Big)\EE_z \nu.
$$
On the other hand,
\begin{align*}
&\EE_z\varphi(X^\delta_{\nu})=  \EE_z\big(\nu_{\delta^\beta}>\nu_{1}\big)\varphi\big(X_{\nu_{1}}\big)
+
\EE_z\big(\nu_{\delta^\beta}<\nu_{1}\big)\varphi\big(X_{\nu_{\delta^\beta}}\big) =\\
&\EE_z\big(\nu_{\delta^\beta}>\nu_{1}\big)\left(
\varphi(1)
-\varphi\big(X_{\nu_{\delta^\beta}}\big)
\right) +\EE_z \varphi\big(X_{\nu_{\delta^\beta}}\big)\ge
\PP_z\big(\nu_{\delta^\beta}>\nu_{1}\big)\bigg(
1
-\sup_{y\in [0,\delta^\beta]}\varphi(y)
\bigg)
\end{align*}
and thus, for sufficiently small $\delta>0$,
\begin{multline*}
\sup_{z\in [0,\eps]}\PP_z\big(\nu_{\delta^\beta}>\nu_{1}\big)\le
C\left(
1
-\sup_{y\le \delta^\beta}\varphi(y)
\right)^{-1}\times \\ \left(\sup_{z\in [0,\eps]}\varphi(z)+
 \sup_{z\in [0,\eps]}\EE_z\Big(\varphi\big(X^\delta_{\nu }\big) -\varphi\big(X^\delta_{\nu -\delta}\big)\Big)
+
\delta^{\gamma\wedge 1/4}\big(1+\sup_{z\in [0,\eps]}\EE_z \nu\big) \right).
\end{multline*}

Since $\varphi$ is continuous on $[0,1]$ and $\varphi(0)=0$,
$$
\varlimsup_{\delta\to 0}\sup_{z\in [0,\eps]}\PP_z\big(\nu_{\delta^\beta}>\nu_{1}\big) \le
C \inf_{z\in [0,\eps]}\varphi(z)\xrightarrow{\eps\to 0}0,
$$
which verifies  \eqref{doux}.

\end{proof}

\begin{rem}
The condition $\beta <\frac {1/2}{1-p}$ originates in the estimate \eqref{eq:2.14}, which is plugged into \eqref{eq:2.12}.
The principle difficulty is that the term
$
\big(\tilde X^\delta_{t_{j-1}}\big)^{-2p-1}
\big( X^\delta_{t_{j-1}}\big)^{3p}
$
cannot be effectively controlled for greater values of $\beta$ as $\delta\to 0$.     For example,
it is not clear how to bound the right hand side of \eqref{eq:2.12} already for  $\beta=1$ and $p=1/2$.
\end{rem}
\section{Numerical experiments}\label{sec-num}

In insurance, one is often interested in calculating the probability of ruin by a particular time $t>0$.
For the diffusion  \eqref{cev},  this probability can be found explicitly and  for $p=1/2$, it has a particularly simple form:
\begin{equation}
\label{Pfla}
\P_x\big(\tau_0(X)\le t\big)=\begin{cases}
\exp\left(-\dfrac{2x}{\sigma^2}\dfrac 1 {t}\right), & \mu=0 \\
\exp\left(-\dfrac{2x}{\sigma^2}\dfrac{\mu e^{\mu t}}{e^{\mu t}-1}\right), & \mu \ne 0
\end{cases}
\end{equation}
Note that $\tau_0(X)$ has an atom at $\{+\infty\}$ when $\mu>0$:
$$
\P_x(\tau_0(X)<\infty)=e^{-2x\mu/\sigma^2}<1.
$$

\begin{figure}
  \includegraphics[scale=0.6]{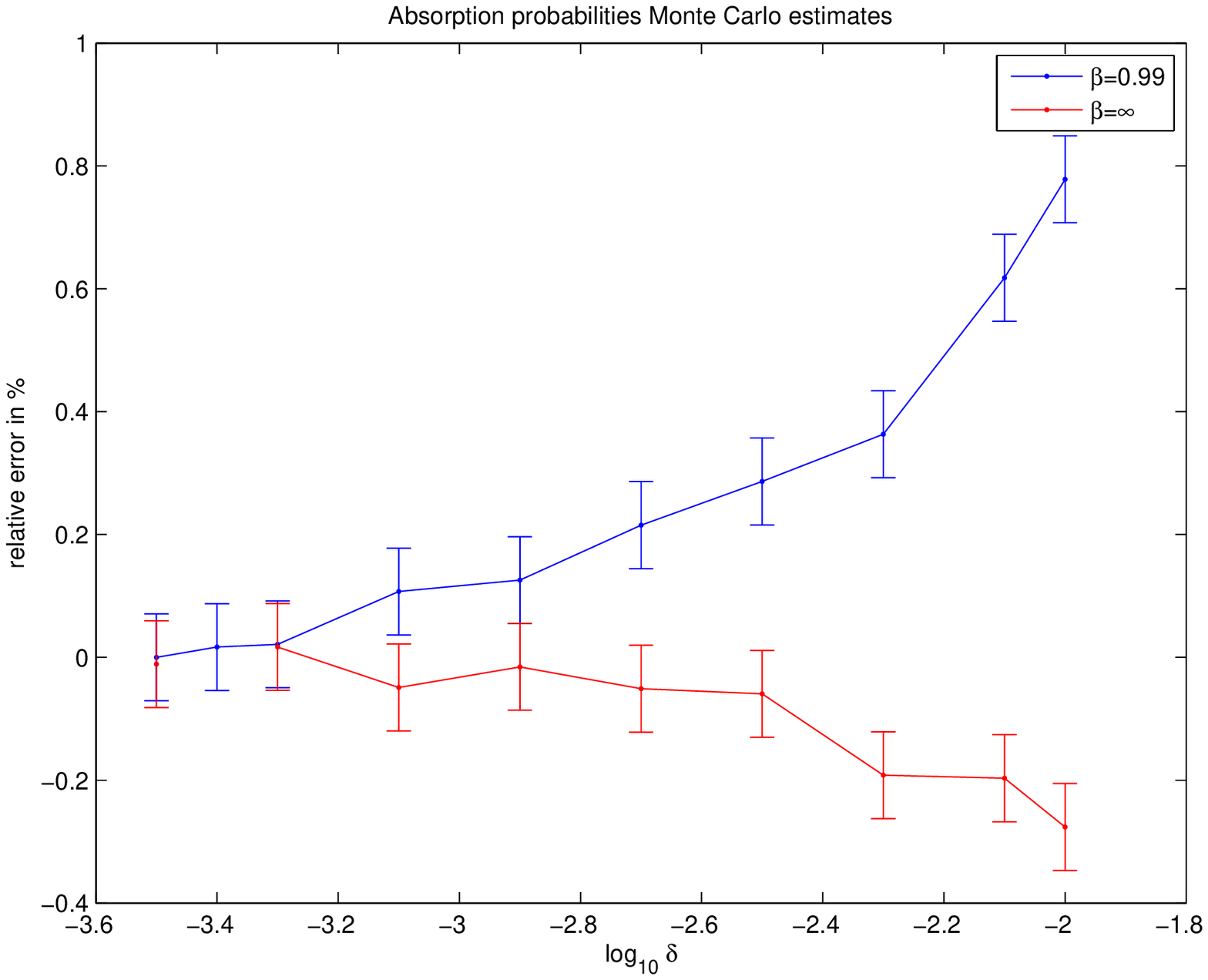 }\\
  \caption{\label{fig1} CEV model with $\mu=0$, $\sigma=1$, $p=1/2$, $x=1$ and $t=5$. The corresponding exact value of
  the absorption probability is $\P_x\big(\tau_0(X)\le t\big)=0.6703...$ }
\end{figure}

Figure \ref{fig1} depicts the results of the Monte Carlo simulation, in which the probability
of absorption  \eqref{Pfla} has been estimated for particular values of the
model parameters, using $M=10^7$ i.i.d trajectories, generated by the Euler-Maruyama
algorithm \eqref{em} and \eqref{lin}. The  relative estimation errors
$$
\mathrm{Err} : =\frac{\widehat{\P}_x\big(\tau_{\delta^\beta}(X^\delta)\le t\big)-\P_x\big(\tau_0(X)\le t\big)}{\P_x\big(\tau_0(X)\le t\big)}
\times 100\%
$$
are plotted versus $\delta$ (in the $\log$ scale), along with the $99\%$ confidence intervals, based
on the CLT approximation. The results appear to  be practically adequate: for example, the accuracy of $0.1\%$ is
obtained already with $\delta=10^{-3}$. The positive bias of the error is not surprising, since the earlier
absorption is more probable for the larger  threshold $\delta^\beta$.

The simulation results also indicate in favor of the convergence
$$
\lim_{\delta\to 0}\P_x\big(\tau_{0}(X^\delta)\le T\big)=
\P_x\big(\tau_{0}(X)\le T\big),
$$
which remains a plausible conjecture.

\appendix
\section{Some properties of the CEV diffusion}\label{A}

In this section we summarize some asymptotic estimates  of the absorption times
for the diffusion \eqref{cev} near the boundary $\{0\}$, which are used in the proof of Lemma \ref{lem3}.
Recall that
$$
\varphi(x):= \P_x\big(\tau_0> \tau_1\big), \quad \text{and} \quad \psi(x):=\E_x \tau_0\vee \tau_1
$$
are the solutions of the following problems respectively (see e.g. \cite{KT2}):
\begin{equation}
\label{prob}
(\mathcal{L}\varphi)(x) = 0, \quad x\in (0,1), \quad \varphi(0)=0, \ \varphi(1)=1
\end{equation}
and
\begin{equation}
\label{expect}
(\mathcal{L}\psi)(x)= -1, \quad x\in (0,1), \quad \psi(0)=\psi(1)=0,
\end{equation}
where $\mathcal{L}$ is the
operator, given by \eqref{genX}. The solutions  in the class of continuous functions on $[0,1]$, which
are twice differentiable on $(0,1)$ are given by the formulas \cite{KT2}:
\begin{equation}
\label{prob-fla}
\varphi(x)= \frac{S(x)-S(0)}{S(1)-S(0)}, \quad x\in [0,1],
\end{equation}
and
\begin{multline}
\label{expect-fla}
\psi(x) =
\varphi(x)\int_x^1 \Big(S(1)-S(y)\Big)m(y)dy
+\\
2\big(1-\varphi(x)\big)\int_0^x \Big(S(y)-S(0)\Big)m(y)dy,
\end{multline}
where
\begin{align*}
& s(y)= \exp\left(-\int^x_{0} \frac{2\mu }{\sigma^2 }z^{1-2p}dz\right) & \text{(scale density)} \\
& S(y)-S(x)=\int_x^y s(z)dz & \text{(scale measure)} \\
& m(y)=\dfrac{1}{\sigma^2 y^{2p} s(y)} & \text{(speed density)}.
\end{align*}

\begin{lem}\label{lem:A1}
The function $\varphi$ is smooth on $(0,1)$, and
$$
\sup_{x\in (0,1)}\Big(\big|\varphi'(x)\big|+x^{2p-1}\big|\varphi''(x)\big|+ x^{2p}\big|\varphi'''(x)\big|\Big)<\infty.
$$
\end{lem}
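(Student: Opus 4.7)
The plan is to use the explicit formula \eqref{prob-fla}, which represents $\varphi$ directly in terms of the scale density $s$. Since $p \in [1/2, 1)$, the exponent $1-2p$ lies in $(-1, 0]$, so the integral $\int_0^y z^{1-2p}\,dz$ defining $-\log s(y)$ is convergent at $0$. Hence $s$ extends continuously to $[0,\infty)$ with $s(0)=1$, is bounded above and below by positive constants on $[0,1]$, and the normalizing constant $C := (S(1)-S(0))^{-1}$ is finite and positive. Smoothness of $\varphi$ on $(0,1)$ is inherited from smoothness of $s$ on $(0,\infty)$.

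Since $\varphi(x) = C\int_0^x s(z)\,dz$, differentiating gives $\varphi'(x) = C s(x)$, $\varphi''(x) = C s'(x)$ and $\varphi'''(x) = C s''(x)$. The bound on $|\varphi'|$ then follows immediately from the uniform boundedness of $s$ on $[0,1]$. For the remaining two bounds I would use the logarithmic derivative identity
$$
\frac{s'(y)}{s(y)} = -\frac{2\mu}{\sigma^2}\,y^{1-2p},
$$
which yields $s'(y) = -\tfrac{2\mu}{\sigma^2}\,y^{1-2p}\, s(y)$ and, by a further differentiation,
$$
s''(y) = -\frac{2\mu}{\sigma^2}\Bigl[(1-2p)\,y^{-2p} - \frac{2\mu}{\sigma^2}\,y^{2-4p}\Bigr]\,s(y).
$$
Multiplying by the weights $x^{2p-1}$ and $x^{2p}$ neutralizes the leading singularities at $0$, giving
$$
x^{2p-1}|\varphi''(x)| = \frac{2|\mu|}{\sigma^2}\,C\,s(x), \qquad x^{2p}|\varphi'''(x)| \le C'\bigl(|1-2p| + x^{2-2p}\bigr)\,s(x),
$$
both uniformly bounded on $(0,1)$ since $p<1$ ensures $x^{2-2p} \le 1$.

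The argument is essentially a direct computation and presents no substantial obstacle. The only point worth tracking carefully is the bookkeeping of exponents: the weights $x^{2p-1}$ and $x^{2p}$ are precisely tuned to absorb the $y^{1-2p}$ and $y^{-2p}$ singularities coming from the two successive differentiations of $\log s$, and the subleading $y^{2-4p}$ term in $s''$ is strictly less singular at the origin because $p<1$ forces $2-4p > -2p$.
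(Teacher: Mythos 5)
Your proposal is correct and follows essentially the same route as the paper: both start from $\varphi'(x)=s(x)/(S(1)-S(0))$, and your logarithmic-derivative identity $s'/s=-\tfrac{2\mu}{\sigma^2}y^{1-2p}$ is exactly the relation $\mathcal{L}\varphi=0$ that the paper uses (and then differentiates) to control $\varphi''$ and $\varphi'''$. The exponent bookkeeping you carry out matches the paper's, so there is nothing to add.
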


\begin{proof}
The scale density $s(x)=\exp\left(-\frac {2\mu}{\sigma^2(2-2p)}x^{2-2p}\right)$ is smooth on $(0,1)$, and
$$
\sup_{x\in (0,1)}\big|\varphi'(x)\big| = \frac{1}{S(1)-S(0)}\sup_{x\in [0,1]}s(x)<\infty.
$$
From \eqref{prob} we get
$
\varphi''(x) = -\dfrac{2\mu x \varphi'(x)}{\sigma^2 x^{2p}}
$
and thus
$$
\sup_{x\in (0,1)}x^{2p-1}\big|\varphi''(x) \big|\le \dfrac{2|\mu|  }{\sigma^2 }\sup_{x\in (0,1)}\big|\varphi'(x)\big|<\infty.
$$
Differentiating \eqref{prob}, we get
\begin{equation}\label{eq:A.5}
\mu  \varphi'(x) +\mu x \varphi''(x)+  \sigma^2 p x^{2p-1}\varphi''(x)+
\frac 1 2 \sigma^2 x^{2p}\varphi'''(x)
=0,
\end{equation}
and consequently
$$
\sup_{x\in (0,1)}x^{2p}\big|\varphi'''(x)\big|<\infty.
$$
\end{proof}

\begin{lem}\label{lem:A2}
The function
$\psi$ is smooth on $(0,1)$ and
$$
\sup_{x\in (0,1)}\Big(x^{2p-1}\big|\psi'(x)\big|+x^{2p}\big|\psi''(x)\big|+ x^{2p+1}\big|\psi'''(x)\big|\Big)
<\infty, \quad\text{for\ } p\in (1/2,1)
$$
and
$$
\sup_{x\in (0,1)}\Big(\frac 1 {\log (1/x) \vee 1}\big|\psi'(x)\big|+x^{2p}\big|\psi''(x)\big|+ x^{2p+1}\big|\psi'''(x)\big|\Big)
<\infty,\ \text{for\ } p =1/2.
$$
\end{lem}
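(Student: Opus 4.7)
The plan mirrors the strategy used for Lemma \ref{lem:A1}: get a bound on $\psi'$ directly from the explicit formula \eqref{expect-fla}, then bootstrap to $\psi''$ and $\psi'''$ using the ODE $(\mathcal{L}\psi)(x)=-1$ and its derivative. The scale density $s(y)=\exp\bigl(-\tfrac{2\mu}{\sigma^2(2-2p)}y^{2-2p}\bigr)$ is smooth on $[0,1]$ with $s(0)=1$, so near the boundary we have $s(y)\asymp 1$, $S(y)-S(0)\asymp y$, $S(1)-S(y)\asymp 1$, and $m(y)\asymp \sigma^{-2}y^{-2p}$. Writing $\psi(x)=\varphi(x)I_1(x)+2(1-\varphi(x))I_2(x)$ with $I_1(x)=\int_x^1(S(1)-S(y))m(y)dy$ and $I_2(x)=\int_0^x(S(y)-S(0))m(y)dy$, the dominant small-$x$ behavior of the integrands is $y^{-2p}$ and $y^{1-2p}$ respectively. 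Hence for $p\in(1/2,1)$ one gets $I_1(x)\sim c\,x^{1-2p}$ and $I_2(x)\sim c'\,x^{2-2p}$, while for $p=1/2$ the first integral picks up a logarithm: $I_1(x)\sim c\log(1/x)$, and $I_2(x)\sim c'\,x$.

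Differentiating $\psi$ explicitly yields
\[
\psi'(x)=\varphi'(x)I_1(x)-2\varphi'(x)I_2(x)-\varphi(x)(S(1)-S(x))m(x)+2(1-\varphi(x))(S(x)-S(0))m(x).
\]
Since $\varphi,\varphi'$ are bounded on $[0,1]$ by Lemma \ref{lem:A1} and $\varphi(0)=0$, a term-by-term inspection using the asymptotics above shows that for $p\in(1/2,1)$ each of the four pieces is $O(x^{1-2p})$, giving $\sup_{x\in(0,1)} x^{2p-1}|\psi'(x)|<\infty$, while for $p=1/2$ the same accounting produces a dominant $\log(1/x)$ term, giving the weaker bound $|\psi'(x)|\le C(\log(1/x)\vee 1)$.

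With $\psi'$ controlled, solve algebraically from the ODE $\mu x\psi'(x)+\tfrac{1}{2}\sigma^2 x^{2p}\psi''(x)=-1$ to obtain
\[
x^{2p}|\psi''(x)|\le \tfrac{2}{\sigma^2}\bigl(1+|\mu|\,x|\psi'(x)|\bigr).
\]
For $p\in(1/2,1)$, $x|\psi'(x)|\le Cx^{2-2p}$, which is bounded on $(0,1)$; for $p=1/2$, $x|\psi'(x)|\le Cx\log(1/x)$, also bounded. Either way $\sup x^{2p}|\psi''(x)|<\infty$. Differentiating the ODE once gives
\[
\mu\psi'(x)+(\mu x+p\sigma^2 x^{2p-1})\psi''(x)+\tfrac{1}{2}\sigma^2 x^{2p}\psi'''(x)=0,
\]
so
\[
x^{2p+1}|\psi'''(x)|\le C\bigl(x|\psi'(x)|+x^{2}|\psi''(x)|+x^{2p}|\psi''(x)|\bigr).
\]
Each term on the right is bounded by the estimates already obtained (using $x^2|\psi''(x)|=x^{2-2p}\cdot x^{2p}|\psi''(x)|$ and $2-2p>0$), so the required bound on $x^{2p+1}|\psi'''(x)|$ follows.

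The only step that requires real work is the first one: tracking which of the four pieces of $\psi'(x)$ survive in the $x\downarrow 0$ limit, and in particular identifying the logarithmic blow-up that appears exactly when $p=1/2$ because the integrand of $I_1$ becomes $y^{-1}$. Once $\psi'$ is bounded on the correct scale the rest is purely algebraic, since the ODE expresses higher derivatives in terms of lower ones and a negative power of $x$.
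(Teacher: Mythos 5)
Your proposal is correct and follows essentially the same route as the paper's proof: differentiate the explicit formula \eqref{expect-fla} term by term to bound $\psi'$ (isolating the logarithmic blow-up at $p=1/2$), then solve the ODE $(\mathcal{L}\psi)=-1$ algebraically for $\psi''$ and differentiate it once more for $\psi'''$. The only cosmetic difference is that you phrase the boundary behavior via asymptotics $s(y)\asymp 1$, $m(y)\asymp y^{-2p}$, where the paper uses the crude inequalities $s\le 1$ and $S$ increasing; the estimates are the same.
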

\begin{proof}
We have
\begin{multline*}
\frac 1 2 \psi'(x)=
\varphi'(x)\int_x^1 \Big(S(1)-S(y)\Big)m(y)dy-\varphi(x) \Big(S(1)-S(x)\Big)m(x)
\\
-\varphi'(x)\int_0^x \Big(S(y)-S(0)\Big)m(y)dy+\big(1-\varphi(x)\big)\Big(S(x)-S(0)\Big)m(x),
\end{multline*}
and, since $S(x)$ is increasing and $s(x)\le 1$,
\begin{multline*}
\frac 1 2 \big|\psi'(x)| \le
|\varphi'(x)|\int_x^1 m(y)dy+ \frac{1}{S(1)-S(0)}x m(x)
\\
+\big|\varphi'(x)\big| \int_0^x y m(y)dy+xm(x).
\end{multline*}
For $p=1/2$, it follows that
$$
\sup_{x\in (0,1)}\frac{1}{\log(1/x)\vee 1}\big|\psi'(x)|<\infty,
$$
and for $p\in (1/2,1)$
$$
\sup_{x\in (0,1)}x^{2p-1}\big|\psi'(x)|<\infty.
$$
Now \eqref{expect} implies
$$
\psi''(x) = -\frac{2\mu }{\sigma^2}x^{1-2p}\psi'(x)-\frac{2}{\sigma^2}x^{-2p}
$$
and hence for $p\in [1/2,1)$,
$$
\sup_{x\in (0,1)}x^{2p}|\psi''(x)|<\infty.
$$
Further, differentiating \eqref{expect} we see that $\psi$ satisfies \eqref{eq:A.5} as well and hence
$$
\sup_{x\in (0,1)}x^{2p+1}|\psi''(x)|<\infty,
$$
which verifies the claim.
\end{proof}

\section{An inequality}

\begin{lem}\label{lemB}
Let $(\eta_j)_{j\in \mathbb{N}}$ be a sequence of random variables and $N$ be an integer valued
random variable. Then for constants $\alpha >0$ and $c>0$ and an integer $k$
$$
\E |\eta_{N\wedge k}| \le c\E\big(N\wedge k\big)^\alpha +
\sum_{j=0}^{k} \E |\eta_j|\one{|\eta_j|> c j^\alpha}.
$$
\end{lem}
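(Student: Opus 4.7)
The plan is to split the expectation $\E |\eta_{N \wedge k}|$ according to whether $|\eta_{N\wedge k}|$ stays below the threshold $c(N\wedge k)^\alpha$ or exceeds it, and then handle the two resulting pieces separately. Specifically, I would write
\begin{equation*}
\E |\eta_{N\wedge k}| = \E |\eta_{N\wedge k}| \one{|\eta_{N\wedge k}| \le c(N\wedge k)^\alpha} + \E |\eta_{N\wedge k}| \one{|\eta_{N\wedge k}| > c(N\wedge k)^\alpha}.
\end{equation*}

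First I would bound the first term simply by replacing $|\eta_{N\wedge k}|$ with the dominating quantity $c(N\wedge k)^\alpha$ on the event where the indicator is $1$, and then dropping the indicator, giving
\begin{equation*}
\E |\eta_{N\wedge k}| \one{|\eta_{N\wedge k}| \le c(N\wedge k)^\alpha} \le c\, \E (N\wedge k)^\alpha.
\end{equation*}

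For the second term I would decompose according to the (at most $k+1$) possible values of $N\wedge k$. Since $\{N\wedge k = j\} \subseteq \Omega$ for $0 \le j \le k$,
\begin{equation*}
\E |\eta_{N\wedge k}| \one{|\eta_{N\wedge k}| > c(N\wedge k)^\alpha} = \sum_{j=0}^{k} \E |\eta_j| \one{N\wedge k = j}\one{|\eta_j| > c j^\alpha} \le \sum_{j=0}^{k} \E |\eta_j| \one{|\eta_j| > c j^\alpha},
\end{equation*}
where the last inequality follows by dropping the indicator $\one{N\wedge k = j}$.

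There is no real obstacle here; the statement is essentially a tautology once the truncation at level $c(N\wedge k)^\alpha$ is introduced. The only point worth flagging is that no measurability or integrability assumption is needed beyond what is implicit in the statement, because the two pieces are each manipulated by pointwise inequalities before taking expectations. Combining the two bounds yields the claimed inequality.
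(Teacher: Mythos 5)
Your proof is correct and follows essentially the same route as the paper's: both rest on the pointwise truncation $|\eta_j| \le c j^\alpha + |\eta_j|\one{|\eta_j| > c j^\alpha}$ combined with the decomposition of the event space over the values of $N\wedge k$, differing only in the order in which these two steps are applied. No further comment is needed.
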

\begin{proof}
For  a fixed integer $k\ge 1$ and a real number $\alpha>0$
\begin{align*}
&\E |\eta_{N\wedge k}| = \E \sum_{j=0}^{k-1} |\eta_j|\one{N=j} + \E |\eta_{k}| \one{N\ge   k}\le\\
&
c\sum_{j=0}^{k-1} j^\alpha \P(N=j)
+ ck^\alpha \P(N\ge  k) +\E \sum_{j=0}^{k-1} |\eta_j|\one{|\eta_j|> cj^\alpha}
+\E |\eta_{k}|\one{|\eta_{k}|> ck^\alpha}\le \\
&c\E (N\wedge k)^\alpha + \sum_{j=0}^{k}\E |\eta_j|\one{|\eta_j|> c j^\alpha}
\end{align*}
\end{proof}

\begin{cor}\label{CorB}
Let $(\xi_j)_{j\in \mathbb{N}}$ be an i.i.d. $N(0,1)$ sequence and  $N$ be an integer valued
random variable. Then for any $\alpha>0$, $p>0$  and integer $k$
\begin{equation}
\label{b1}
\E |\xi_{N\wedge k}|^p\le \E\big(N\wedge k\big)^\alpha + C_{\alpha,p},
\end{equation}
with a constant $C_{\alpha, p}$ depending only on $\alpha$ and $p$. Moreover, for any $r\ge 0$, the sum
$S_n = \sum_{j=0}^n |\xi_j|^p\one{|\xi_j|\ge r}$
satisfies
\begin{equation}
\label{b2}
\E S_{N\wedge k}\le  2 \E |\xi_1|^p\one{|\xi_1|\ge r} \E (N\wedge k)+ C_p,
\end{equation}
with a constant $C_p$, depending only on $p$.
\end{cor}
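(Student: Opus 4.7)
The plan is to derive both inequalities as direct applications of Lemma~\ref{lemB}, combined with Gaussian tail and concentration estimates.

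For \eqref{b1}, I apply Lemma~\ref{lemB} with $\eta_j := |\xi_j|^p$ and $c := 1$, which yields
\[
\E|\xi_{N\wedge k}|^p \le \E(N\wedge k)^\alpha + \sum_{j=0}^k \E|\xi_j|^p\one{|\xi_j|^p > j^\alpha}.
\]
Since the $\xi_j$ are i.i.d.\ $N(0,1)$, each tail term equals $\E|\xi_1|^p\one{|\xi_1| > j^{\alpha/p}}$. By Cauchy--Schwarz and the Gaussian tail $\P(|\xi_1| > t)\le e^{-t^2/2}$, this is bounded above by $\sqrt{\E|\xi_1|^{2p}}\, e^{-j^{2\alpha/p}/4}$, and summing in $j$ yields a finite constant $C_{\alpha,p}$ depending only on $\alpha$ and $p$.

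For \eqref{b2}, I set $\eta_j := S_j \ge 0$, $\alpha := 1$, and $c := 2\mu$ with $\mu := \E|\xi_1|^p\one{|\xi_1|\ge r}$ in Lemma~\ref{lemB}, obtaining
\[
\E S_{N\wedge k} \le 2\mu\,\E(N\wedge k) + \sum_{j=0}^k \E S_j\one{S_j > 2\mu j}.
\]
It remains to bound the tail sum by a constant $C_p$ depending only on $p$. The summands $M_i := |\xi_i|^p\one{|\xi_i|\ge r}$ are non-negative i.i.d.\ random variables bounded above by $|\xi_i|^p$, which is sub-Gaussian for $p \in [1/2, 1]$ with norm controlled by $p$. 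A Hoeffding-type concentration inequality then gives $\P(S_j > 2\mu j)\le \exp(-c(p)\,\mu^2 j)$ for $j \ge 1$; combined with a second-moment bound on $S_j$ and Cauchy--Schwarz, this produces the estimate $\E S_j\one{S_j > 2\mu j}\le C\sqrt{j+1}\, e^{-c(p)\mu^2 j/2}$, which sums to a finite quantity.

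The main obstacle is obtaining the tail bound uniformly in $r$: the exponential rate $c(p)\mu^2$ degenerates as $\mu\to 0$ (i.e., $r\to\infty$). This is resolved by a two-regime analysis. For $\mu$ bounded below by a fixed threshold $\mu_0 > 0$, subgaussian concentration already yields a bound independent of $r$. For $\mu < \mu_0$, one exploits the domination $M_i \le |\xi_i|^p$ to reduce the tail estimate to a quantity governed by the $r$-independent partial sums $U_j := \sum_{i=0}^j |\xi_i|^p$, for which concentration is uniform. Combining both regimes produces the required constant $C_p$, depending only on $p$.
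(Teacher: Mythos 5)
Your treatment of \eqref{b1} coincides with the paper's (Lemma~\ref{lemB} with $c=1$, $\eta_j=|\xi_j|^p$, then Cauchy--Schwarz and the Gaussian tail) and is fine. For \eqref{b2} you also start exactly as the paper does, applying Lemma~\ref{lemB} with $\eta_j=S_j$, $\alpha=1$, $c=2\mu$ where $\mu=\E|\xi_1|^p\one{|\xi_1|\ge r}$; the divergence is in how you bound the tail sum $\sum_{j\le k}\E S_j\one{S_j>2\mu j}$, and here there are two genuine problems. First, you invoke sub-Gaussian (Hoeffding-type) concentration for the summands, which you yourself qualify as valid only for $p\in[1/2,1]$ (you appear to have conflated the exponent of the corollary with the CEV exponent). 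The corollary is stated for all $p>0$ and is applied in the proof of Lemma~\ref{lem3} with $p=3$ (the factors $|\xi_j|^3$ in $R^{(3)}$); for $p>1$ the variable $|\xi|^p$ is not sub-Gaussian and Hoeffding fails. The paper instead centers, sets $\zeta_i=M_i-\mu$, and uses Chebyshev with a fixed high moment via $\E\big(j^{-1}\sum_{i\le j}\zeta_i\big)^{2m}\le C_m j^{-m}$ (taking $m=5$), which combined with $\sqrt{\E S_j^2}\lesssim j$ gives a summable series for every $p>0$. You should replace the Hoeffding step by this moment argument.

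Second, the uniformity in $r$. You are right that this is the sticking point --- and, to your credit, the paper itself does not address it: its final constant is $C\sqrt{C_{10}}/c^{5}$ with $c=2\mu$ depending on $r$, so the paper's own proof also fails to produce a $C_p$ depending only on $p$. But your two-regime fix cannot be completed, because the obstruction is not in the choice of concentration inequality: the tail sum itself diverges as $r\to\infty$. Writing $q=\P(|\xi_1|\ge r)$, one has $\mu\asymp qr^{p}$, and for $1\le j\lesssim 1/q$ the event that exactly one of $M_0,\dots,M_j$ is nonzero already forces $S_j\ge r^{p}>2\mu j$, so $\E S_j\one{S_j>2\mu j}\gtrsim (j+1)\mu$ and the sum over such $j$ is of order $r^{p}/q\to\infty$. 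In fact \eqref{b2} with an $r$-independent constant is false for an arbitrary integer-valued $N$: taking $N$ to be the argmax of $j\mapsto S_j-2\mu j$ over $j\le k$ with $k\asymp 1/q$ gives $\E S_{N\wedge k}-2\mu\E(N\wedge k)\ge\E\big[(S_{j_1}-2\mu j_1)\one{j_1\le k}\big]\gtrsim r^{p}$, where $j_1$ is the first index with $|\xi_{j_1}|\ge r$. The statement (and the application in Lemma~\ref{lem3}, where $r=\delta^{-\gamma/2}\to\infty$) is rescued only by using that $\nu/\delta\wedge k$ is a stopping time of the filtration generated by $(\xi_j)$: then $\{N\wedge k\ge j\}$ is independent of $\xi_j$, Wald's identity gives $\E S_{N\wedge k}=\mu\big(\E(N\wedge k)+1\big)$, and \eqref{b2} holds with $C_p=\E|\xi_1|^p$. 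Either add that hypothesis or accept a constant depending on $r$; as written, neither your sketch nor a refinement of it can close the gap.
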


\begin{proof}
Applying Lemma \ref{lemB} with $c:=1$ and $\eta_j:=|\xi_j|^p$, we obtain the inequality \eqref{b1} with
\begin{multline*}
C_{\alpha,p}:=\sum_{j=1}^\infty \E |\xi_j|^p\one{|\xi_j|^p >  j^\alpha}=\sum_{j=1}^\infty
\sqrt{\E |\xi_1|^{2p}}\sqrt{\P(\xi_1 >  j^{\alpha/p})} \le\\
 \sqrt{(2p-1)!!} \sqrt{\frac{2}{\pi}}
\sum_{j=1}^\infty \frac{1}{j^{\alpha/2p}}e^{-\frac 1 4j^{2\alpha/p}}<\infty.
\end{multline*}

Further,  define  $\zeta_j:=|\xi_j|^p\one{|\xi_1|\ge r}-\E|\xi_1|^p\one{|\xi_1|\ge r}$, so that $\E\zeta_1=0$ and $\zeta_i$'s are i.i.d. and
$\E |\zeta_1|^{2m}<\infty$ for any $m\ge 1$. It follows that
\begin{equation}
\label{simb}
\E\left(\frac 1 j \sum_{i=1}^j \zeta_i\right)^{2m} \le C_m/ {j^m},
\end{equation}
with a constant $C_m$.

Applying  Lemma \ref{lemB}  with $\alpha:=1$, $c:=2\E|\xi_1|^p\one{|\xi_1|\ge r}$ and $\eta_j:=S_{j}$, we get
\begin{align*}
\E S_{N\wedge k}\le  & 2 \E |\xi_1|^p\one{|\xi_1|\ge r} \E (N\wedge k)+
\sum_{j=0}^{k} \E S_j\one{S_j> 2j\E|\xi_1|^p} \le \\
&2 \E |\xi_1|^p\one{|\xi_1|\ge r} \E (N\wedge k)+
\sum_{j=0}^{k} \sqrt{\E S^2_j}\sqrt{\P\left(\frac 1 j \sum_{i=1}^j \zeta_i > c/2\right)}\le \\
&2 \E |\xi_1|\one{|\xi_1|\ge r} \E (N\wedge k)+
C \sum_{j=0}^{k} j\frac 1{c^5}\sqrt{\E\left(\frac 1 j \sum_{i=1}^j \zeta_i \right)^{10}}\le \\
&
2 \E |\xi_1|\one{|\xi_1|\ge r} \E (N\wedge k)+
\frac{C \sqrt{C_{10}}}{c^5} \sum_{j=0}^{\infty} j^{-3/2}.
\end{align*}
\end{proof}


\begin{thebibliography}{10}

\bibitem{AKL10}
V.~M. Abramov, F.~C. Klebaner, and R.~Sh. Liptser.
\newblock The {E}uler-{M}aruyama approximations for the {CEV} model.
\newblock {\em Discrete Contin. Dyn. Syst. Ser. B}, 16(1):1--14, 2011.

\bibitem{R09}
R.~Avikainen.
\newblock On irregular functionals of {SDE}s and the {E}uler scheme.
\newblock {\em Finance Stoch.}, 13(3):381--401, 2009.

\bibitem{DK01}
F.~Delbaen and H.~Shirakawa.
\newblock A note on option pricing for the constant elasticity of variance
  model.
\newblock {\em Asia-Pacific Financial Markets}, 9(2):85--99, 2002.

\bibitem{E79}
S.~N. Ethier.
\newblock Limit theorems for absorption times of genetic models.
\newblock {\em Ann. Probab.}, 7(4):622--638, 1979.

\bibitem{KE86}
S.~N. Ethier and T.~G. Kurtz.
\newblock {\em Markov processes}.
\newblock Wiley Series in Probability and Mathematical Statistics: Probability
  and Mathematical Statistics. John Wiley \& Sons Inc., New York, 1986.
\newblock Characterization and convergence.

\bibitem{GS99}
M.~T. Giraudo and L.~Sacerdote.
\newblock An improved technique for the simulation of first passage times for
  diffusion processes.
\newblock {\em Comm. Statist. Simulation Comput.}, 28(4):1135--1163, 1999.

\bibitem{GSZ01}
M.~T. Giraudo, L.~Sacerdote, and C.~Zucca.
\newblock A {M}onte {C}arlo method for the simulation of first passage times of
  diffusion processes.
\newblock {\em Methodol. Comput. Appl. Probab.}, 3(2):215--231, 2001.

\bibitem{G00}
E.~Gobet.
\newblock Weak approximation of killed diffusion using {E}uler schemes.
\newblock {\em Stochastic Process. Appl.}, 87(2):167--197, 2000.

\bibitem{GM04}
E.~Gobet and S.~Menozzi.
\newblock Exact approximation rate of killed hypoelliptic diffusions using the
  discrete {E}uler scheme.
\newblock {\em Stochastic Process. Appl.}, 112(2):201--223, 2004.

\bibitem{HRS01}
K.~Helmes, S.~R{\"o}hl, and R.~H. Stockbridge.
\newblock Computing moments of the exit time distribution for {M}arkov
  processes by linear programming.
\newblock {\em Oper. Res.}, 49(4):516--530, 2001.

\bibitem{JS03}
J.~Jacod and A.~N. Shiryaev.
\newblock {\em Limit theorems for stochastic processes}, volume 288 of {\em
  Grundlehren der Mathematischen Wissenschaften [Fundamental Principles of
  Mathematical Sciences]}.
\newblock Springer-Verlag, Berlin, second edition, 2003.

\bibitem{JL03}
K.~M. Jansons and G.~D. Lythe.
\newblock Exponential timestepping with boundary test for stochastic
  differential equations.
\newblock {\em SIAM J. Sci. Comput.}, 24(5):1809--1822 (electronic), 2003.

\bibitem{KT2}
S.~Karlin and H.~M. Taylor.
\newblock {\em A second course in stochastic processes}.
\newblock Academic Press Inc., New York, 1981.

\bibitem{Kle}
F.~C. Klebaner.
\newblock {\em Introduction to stochastic calculus with applications}.
\newblock Imperial College Press, London, second edition, 2005.

\bibitem{KP}
P.~E. Kloeden and E.~Platen.
\newblock {\em Numerical solution of stochastic differential equations},
  volume~23 of {\em Applications of Mathematics (New York)}.
\newblock Springer-Verlag, Berlin, 1992.

\bibitem{KKK10}
R.~Korn, E.~Korn, and G.~Kroisandt.
\newblock {\em Monte {C}arlo methods and models in finance and insurance}.
\newblock Chapman \& Hall/CRC Financial Mathematics Series. CRC Press, Boca
  Raton, FL, 2010.

\bibitem{LL94}
P.~L\'{a}nsk\'{y} and V.~L\'{a}nsk\'{a}.
\newblock First-passage-time problem for simulated stochastic diffusion
  processes.
\newblock {\em Comput. Biol. Med.}, 24(2):91--101, 1994.

\bibitem{LS89}
R.~Sh. Liptser and A.~N. Shiryayev.
\newblock {\em Theory of martingales}, volume~49 of {\em Mathematics and its
  Applications (Soviet Series)}.
\newblock Kluwer Academic Publishers Group, Dordrecht, 1989.
\newblock Translated from the Russian by K. Dzjaparidze [Kacha Dzhaparidze].

\bibitem{M95}
G.~N. Mil{\cprime}shte{\u\i}n.
\newblock Solution of the first boundary value problem for equations of
  parabolic type by means of the integration of stochastic differential
  equations.
\newblock {\em Teor. Veroyatnost. i Primenen.}, 40(3):657--665, 1995.

\bibitem{MT02}
G.~N. Mil{\cprime}shte{\u\i}n and M.~V. Tret{\cprime}yakov.
\newblock The simplest random walks for the {D}irichlet problem.
\newblock {\em Teor. Veroyatnost. i Primenen.}, 47(1):39--58, 2002.

\bibitem{MT99}
G.~N. Milstein and M.~V. Tretyakov.
\newblock Simulation of a space-time bounded diffusion.
\newblock {\em Ann. Appl. Probab.}, 9(3):732--779, 1999.

\bibitem{MT04}
G.~N. Milstein and M.~V. Tretyakov.
\newblock {\em Stochastic numerics for mathematical physics}.
\newblock Scientific Computation. Springer-Verlag, Berlin, 2004.

\bibitem{RW1}
L.~C.~G. Rogers and D.~Williams.
\newblock {\em Diffusions, {M}arkov processes, and martingales. {V}ol. 1}.
\newblock Cambridge Mathematical Library. Cambridge University Press,
  Cambridge, 2000.
\newblock Foundations, Reprint of the second (1994) edition.

\bibitem{RW2}
L.~C.~G. Rogers and D.~Williams.
\newblock {\em Diffusions, {M}arkov processes, and martingales. {V}ol. 2}.
\newblock Cambridge Mathematical Library. Cambridge University Press,
  Cambridge, 2000.
\newblock It{\^o} calculus, Reprint of the second (1994) edition.

\bibitem{Sh}
A.~N. Shiryaev.
\newblock {\em Probability}, volume~95 of {\em Graduate Texts in Mathematics}.
\newblock Springer-Verlag, New York, second edition, 1996.
\newblock Translated from the first (1980) Russian edition by R. P. Boas.

\bibitem{Y02}
L.~Yan.
\newblock The {E}uler scheme with irregular coefficients.
\newblock {\em Ann. Probab.}, 30(3):1172--1194, 2002.

\bibitem{Z08}
H.~Z{\"a}hle.
\newblock Weak approximation of {SDE}s by discrete-time processes.
\newblock {\em J. Appl. Math. Stoch. Anal.}, pages Art. ID 275747, 15, 2008.

\end{thebibliography}

\def\cprime{$'$} \def\cprime{$'$} \def\cprime{$'$}

\end{document}